\newtheorem{theorem}{Theorem}[section]
\newtheorem{definition}[theorem]{Definition}
\newtheorem{proposition}[theorem]{Proposition}
\newtheorem{lemma}[theorem]{Lemma}
\newtheorem{remark}[theorem]{Remark}
\newcommand{\p}{\partial}
\newcommand{\cB}{{\mathcal B}}
\newcommand{\cL}{{\mathcal L}}
\newcommand{\cX}{{\mathcal X}}
\newcommand{\cS}{{\mathcal S}}
\newcommand{\R}{{\mathbb R}}
\newcommand{\Z}{{\mathbb Z}}
\newcommand{\C}{{\mathbb C}}
\newcommand{\T}{{\mathbb T}}
\newcommand{\ran}{\rangle}
\newcommand{\lan}{\langle}
\title[Periodic Nonlinear Waves Interactions for the Benney System]
{Well-posedness and Stability in the Periodic case for  the Benney System}
\author[J. Angulo]{J. Angulo $^1$}
\author[A. Corcho]{A. J. Corcho $^2$}
\author[S. Hakkaev]{S.  Hakkaev $^3$ $^\dagger$}
\thanks{ $^\dagger$ J. Angulo and A. J. Corcho
have been supported by the  research project Edital
Universal-CNPq/Brazil. S. Hakkaev was supported by
FAPESP/Brazil. The third author would like to express his
thanks to the Institute of Mathematics and Statistic (IME) at the
University of S\~ao Paulo/SP-Brazil for its hospitality} \subjclass{35Q55, 35Q60.}
\keywords{Benney system, Well-Posedness, Stability of periodic
traveling  waves.}
\begin{document}
\maketitle

{\scriptsize \centerline{$^1$Department of Mathematics, IME-USP}
 \centerline{Rua do Mat\~ao 1010, Cidade Universit\'aria, CEP 05508-090. }
 \centerline{S\~ao Paulo, SP, Brazil. \email{angulo@ime.usp.br}}}
 {\scriptsize \centerline{$^2$ Universidade Federal de Alagoas Instituto de Matem\'atica}
\centerline{Campus A. C. Sim\~oes, Tabuleiro dos Martins, 57072-900.}
\centerline{Macei\'o, AL, Brazil.
\email{adan@mat.ufal.br}}}
{\scriptsize \centerline{$^3$Faculty of Mathematics and Informatics, Shumen University.}
 \centerline{9712 Shumen, Bulgaria.  \email{shakkaev@fmi.shu-bg}}}

\setcounter{page}{1}

\begin{quote}
{\normalfont\fontsize{8}{10}\selectfont {\bfseries Abstract.} We
establish local well-posedness results in weak periodic function
spaces for the Cauchy problem of the Benney system. The Sobolev
space $H^{1/2}\times L^2$ is the lowest regularity attained and
also we cover the energy space $H^{1}\times L^2$, where global
well-posedness follows from the conservation laws of the system.
Moreover, we show the existence of smooth explicit family of
periodic travelling waves of \emph{dnoidal} type and we prove,
under certain conditions, that this family is orbitally stable in
the energy space.
\par}
\end{quote}

\section{Introduction}

In this paper we consider the system introduced by Benney in
\cite{Benney2} which models the interaction between short and long
waves, for example in the theory of resonant water wave
interaction in nonlinear medium:
\begin{equation}
\label{benney}
\begin{cases}
iu_t+u_{xx}=uv +\beta |u|^2u , &  (x,t)\in
\mathcal{M}\times \triangle T \\
v_t  = (|u|^2)_x,\\
u(x,0)=u_0(x) ,\quad v(x,0)=v_0(x),
\end{cases}
\end{equation}
where $u=u(x,t)$ is a complex valued function representing the
enveloped of short waves, and $v=v(x,t)$ is a real valued function
representing the long wave. Here $\beta $ is a real parameter,
$\triangle T$ is the time interval $[0,T]$ and $\mathcal{M}$ is
the real line $\R$ or the one dimensional torus $\T=\R/\Z $.

We let $H^s(\mathcal{M})$ by denoting the classical Sobolev space
with the norm
\begin{equation*}
\|f\|_{s}=\small{\left (\displaystyle\int_{-\infty}^{\infty}(1+|\xi|)^{2s}|\hat{f}(\xi)|^2dx\right)^{1/2}}\quad \text{if}\;\mathcal{M}=\R,
\end{equation*}
and
\begin{equation*}
\|f\|_{s}=\small{\left (\displaystyle\sum_{n\in \Z}(1+|n|)^{2s}|\hat{f}(n)|^2dx\right)^{1/2}}\quad \text{if}\;\mathcal{M}=\T,
\end{equation*}
where $\hat{f}(\xi)$ and $\hat{f}(n)$ denote  the Fourier transform and Fourier coefficient of $f$, respectively. We
consider the initial data $(u_0,v_0)$ in the space
$H^r(\mathcal{M})\times H^s(\mathcal{M})$ with the induced norm
$$\|(f,g)\|_{r\times s}:=\|f\|_r+\|g\|_s.$$

The following quantities
\begin{align}
&E_1[u(.,t)]=\int_I|u(x,t)|^2dx,\label{CL-1}\\
&E_2[u(.,t),v(.,t)]=\int_I \left[v(x,t)|u(x,t)|^2+
|u_x(x,t)|^2+\tfrac{\beta}{2}|u(x,t)|^4\right]dx\label{CL-2}\\
\intertext{and}
&E_3[u(.,t), v(.,t)]=\int_I \Bigl[
|v(x,t)|^2+ 2 \text{Im}\;(u(x,t)\bar{u}_x(x,t))
\Bigl]dx\label{CL-3}
\end{align}
with the interval $I=(-\infty, +\infty)$  if $\mathcal{M}=\R$ and $I=[0,1]$  if $\mathcal{M}=\T$
are invariants by the flux of the system (\ref{benney}); i.e, the natural energy space for the system is  $H^1(\mathcal{M})\times
L^2(\mathcal{M})$.

\subsection{Some results in the continuous case}
When $\mathcal{M}=\R$ the local well-posedness for (\ref{benney})
for data $(u_0,v_0)\in H^{(s+1/2)}(\R)\times H^s(\R)$ with indices
$s\ge 0$ was established in the works \cite {Bekiranov},
\cite{Ginibre} and \cite{Tsutsumi-Hatano}. Furthermore, in
\cite{Tsutsumi-Hatano} also was proved global well-posedness in
$H^{(s+1/2)}(\R)\times H^s(\R)$ for $s=0$ if $\beta =0$ and for
$s\in \Z^+$ and any real $\beta$ by using the conservation laws
(\ref{CL-1}), (\ref{CL-2}) and (\ref{CL-3}).

Recently, in \cite{Corcho} Corcho showed that for $\beta < 0$
(focusing case) and for data $(u_0,v_0)\in H^r(\R)\times H^s(\R)$,
with $0\le 3r+1<1$ and $r(2s+3)+1\ge 0$, this problem is ill-posed
in the following sense: the data-solution mapping fails to be
uniformly continuous on bounded sets of $H^r(\R)\times H^ s(\R)$.

Concerning to the existence and stability  of solitary waves
solutions for (\ref{benney}) of the general form
\begin{equation}\label{eq:ptw}
\left \{
    \begin{array}{l}
u(x,t)=e^{i\omega t}e^{ic(x-ct)/2}\phi_s(x-ct),\\
v(x,t)=\psi_s(x-ct),
\end{array}
   \right.
\end{equation}
where $\phi_s,\psi_s:\Bbb R\to \Bbb R$ are smooth, $c>0$,
$\omega\in \Bbb R$, and $\phi_s(\xi), \psi_s(\xi)\to 0$ as
$|\xi|\to \infty$, Lauren\c cot in \cite{Laurencot} studied for
$\beta =0$, the nonlinear stability of the orbit
$$
\Omega_{(\Phi,\Psi)}=\left\{(e^{i\theta}\Phi(\cdot+x_0),\Psi(\cdot+x_0));\;
(\theta,x_0)\in [0,2\pi)\times \Bbb R\right\},
$$
in $H^1(\Bbb R)\times L^2(\Bbb R)$ by the flow generated by
(\ref{benney}). Here we have that
$\Phi(\xi)=e^{ic\xi/2}\phi_{s}(\xi)$, $\Psi(\xi)=\psi_{s}(\xi)$,
and
\begin{equation}\label{eq:sol}
 \phi_{s}(\xi)=\sqrt{2c\sigma}\text{sech}(\sqrt
\sigma \xi),\qquad \psi_{s}(\xi)=-\frac{1}{c}\phi_{s}^2(\xi)
\end{equation}
$\sigma=\omega-\frac{c^2}{4}>0$.

\subsection{Main results in the periodic case}
In the present work  we focus the atten\-tion on the case $\mathcal{M}=\T$ and we study the following problems:
\begin{itemize}
\item well-posedness in Sobolev spaces with low regularity,
\item existence and nonlinear stability of periodic
travelling waves
\end{itemize}
for the periodic initial values $(u_0,v_0)$  belonging into the  space  $H^r(\T)\times H^s(\T)$, also
denoted by $H^r_{per}\times H^s_{per}$.

As follows we define the concepts of well-posedness and the
stability that will be use in this work.
\begin{definition}[Well-posedness and Ill-posedness]
We say that the system (\ref{benney}) is locally well-posed, in
time,  in the space   $H^r_{per}\times H^s_{per}$ if
the following conditions hold:
\begin{enumerate}
    \item [(a)] for every $(u_0,v_0)$ in the space  $H^r_{per}\times H^s_{per}$  there exists a positive time
    $T=T\left (\|u_0\|_r, \|v_0\|_s \right )$ and a  distributional solution
    $(u,v):\T\times \triangle T \longrightarrow \C \times \R$ which is in the space
    $C\left(\triangle T;\; H^r_{per}\times H^s_{per} \right)$;

    \vspace{0.15cm}
    \item [(b)] the data-solution mapping $(u_0,v_0)\longmapsto (u,v)$ is uniformly continuous
    from $H^r_{per}\times H^s_{per}$ to $C\left (\triangle T;\; H^r_{per}\times H^s_{per}\right )$;

    \vspace{0.15cm}
    \item [(c)] there is an additional Banach space $\mathcal{X}$ such that $(u,v)$ is the unique
    solution to the Cauchy problem in
    $\mathcal{X}\cap C\left (\triangle T;\; H^r_{per}\times H^s_{per}\right)$.
    \end{enumerate}
    Moreover, we say that the problem is ill-posed if, at least, one
    of the above conditions fails.
\end{definition}

Before stating our well and ill posedness results we will give some useful notations. Let
$\eta$ be a function in $C_0^{\infty}(\R)$ such that $0\le \eta(t)
\le 1$,
$$\eta(t)=
\begin{cases}
1&\;\; \text{if}\;\;|t|\le 1,\\
0&\;\; \text{if}\;\;|t|\ge 2,
\end{cases}$$
and $\eta_{\delta}(t)=\eta (\tfrac{t}{\delta})$. We denote by
$\lambda \pm$ a number slightly larger, respectively smaller, than
$\lambda$ and by $\lan \cdot \ran$, $\lan \xi \ran= 1+|\xi|$. The
characteristic function on the set $A$ is denoted by $\chi_{A}$.
Furthermore, we will work with the auxiliary periodic Bourgain
space $X^{s,b}_{per}$ defined as follows: first we denote by $\cX$
the space of functions $f: \T \times \R \rightarrow \C$ such that
\begin{enumerate}
\item [(\,i\,)] $f(x,\cdot) \in \cS (\R)$\;for each\;  $x\in \T$;
\item [(ii)] $f(\cdot,t) \in C^{\infty}(\T)$\; for each\; $t\in\R$.
\end{enumerate}
For $s, b \in \R$,  the spaces $H^{b}_tH^{s}_{per}$ and
$X^{s,b}_{per}$ are the completion of $\cX$ with respect to the
norms
\begin{equation}\label{Bourgain-Space-0}
\|f\|_{H^b_tH^{s}_{per}}=\left(\sum_{n\in \Z}
\int\limits_{-\infty}^{+\infty}(1+ |n|)^{2s}(1+ |\tau
|)^{2b}|\widehat{f}(n,\tau)|^2d\tau \right)^{\frac{1}{2}}
\end{equation}
and
\begin{equation}\begin{split}\label{Bourgain-Space}
\|f\|_{X^{s,b}_{per}}&=\|S(-t)f\|_{H^b_tH^{s}_{per}}\\
&=\left(\sum_{n\in \Z} \int\limits_{-\infty}^{+\infty}(1+
|n|)^{2s}(1+ |\tau+ n^2|)^{2b}|\widehat{f}(n,\tau)|^2d\tau
\right)^{\frac{1}{2}},
\end{split}\end{equation} respectively, where $S(t):=e^{it\partial_x^2}$ is the
corresponding Schr\"odinger generator (unitary group) associated
to the linear problem,
\begin{equation}
\begin{cases}
 iu_t+u_{xx}=0\\
  u(x,0)=g_(x).
  \end{cases}
\end{equation}
For any $r, s\in \R$\; and\;  $b_1, b_2 >1/2$, we have the
embedding $X^{r,b_1}_{per}\hookrightarrow C\left (\R;
H^r_{per}\right)$ and $H^{b_2}_tH^{s}_{per} \hookrightarrow C\left
(\R; H^s_{per}\right)$. For the case $b=1/2$ the  embedding can be
guaranteed by considering the following slightly modifications of
the Bourgain spaces:
\begin{equation}\label{Bourgain-Space-modified}
\|f\|_{X^r_{per}}:=\|f\|_{X^{r,1/2}_{per}}+ \|\lan n\ran^r\widehat
f (n,\tau)\|_{\ell^2_nL^1_{\tau}}
\end{equation}
and
\begin{equation}\label{Bourgain-Space-0-modified}
\|f\|_{Y^s_{per}}:=\|f\|_{H^{1/2}_tH^{s}_{per}}+  \|\lan
n\ran^s\widehat f (n,\tau)\|_{\ell^2_nL^1_{\tau}}
\end{equation}

Concerning local well-posedness we obtain the following result:

\begin{theorem}[Local Well-Posedness]\label{local-theorem-periodic}
For any $(u_0,v_0)\in H^r_{per}\times H^s_{per}$ with $r, s$ verifying the condition
\begin{equation}\label{local-theorem-periodic-a}
\max\{0,\,r-1\}\le s \le \min\{r,\,2r-1\},
\end{equation}
there exist a positive time $T=T\left (\|u_0\|_{r},
\|v_0\|_{s}\right)$ and a unique solution $(u(t),v(t))$ of the
initial value problem (\ref{benney}), satisfying
\begin{enumerate}
\item [(a)] $\left(\eta_T(t)u, \eta_T(t)v\right)\in X_{per}^r
\times Y_{per}^s$; \vspace{0.2cm} \item [(b)] $(u, v)\in
C\left(\triangle T;\; H^r_{per} \times H^s_{per}\right)$.
\end{enumerate}
Moreover, the map $(u_0,v_0) \longmapsto (u(t),v(t))$ is locally
uniformly continuous  from $H^r_{per} \times H^s_{per}$ into $C\left
(\triangle T;\; H^r_{per} \times H^s_{per} \right )$.
\end{theorem}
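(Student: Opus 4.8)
The plan is to recast the Cauchy problem as a coupled system of integral equations via Duhamel's formula and to solve it by a fixed-point argument in the resolution space $X^r_{per}\times Y^s_{per}$, exploiting the smoothing encoded in the Bourgain norms. Writing $S(t)=e^{it\p_x^2}$ for the Schr\"odinger group and observing that the linear part of the $v$-equation carries no dispersion (its characteristic is $\tau=0$, which is exactly what the weight $\lan\tau\ran$ in $H^{1/2}_tH^{s}_{per}$ measures), I would study the map $\Phi=(\Phi_1,\Phi_2)$ given by
$$
\Phi_1(u,v)=\eta_T(t)S(t)u_0 - i\,\eta_T(t)\int_0^t S(t-t')\big[u(t')v(t')+\beta|u(t')|^2u(t')\big]\,dt',
$$
$$
\Phi_2(u,v)=\eta_T(t)v_0 + \eta_T(t)\int_0^t \p_x\big(|u(t')|^2\big)(t')\,dt',
$$
and show it is a contraction on a ball of $X^r_{per}\times Y^s_{per}$.

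First, the linear estimates. The free-evolution and Duhamel bounds
$$\|\eta_T S(t)u_0\|_{X^r_{per}}\lesssim \|u_0\|_r, \qquad \Big\|\eta_T\!\int_0^t S(t-t')F\,dt'\Big\|_{X^r_{per}}\lesssim \|F\|_{X^{r,-1/2}_{per}}+\big\|\lan n\ran^r\widehat F\,\lan\tau+n^2\ran^{-1}\big\|_{\ell^2_nL^1_\tau},$$
together with their non-dispersive analogues mapping $H^{-1/2}_tH^{s}_{per}$ into $Y^s_{per}$, are standard consequences of the definitions of the modified spaces; the extra $\ell^2_nL^1_\tau$ pieces in (\ref{Bourgain-Space-modified})--(\ref{Bourgain-Space-0-modified}) are precisely what make the endpoint $b=1/2$ compatible with the embeddings into $C\left(\triangle T;H^r_{per}\times H^s_{per}\right)$, hence with conclusion (b). These reduce everything to controlling the nonlinear terms in the appropriate dual norms.

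The heart of the matter is the multilinear estimates, which I expect to be the main obstacle. Three are needed: the coupling estimate $\|uv\|_{X^{r,-1/2}_{per}}\lesssim \|u\|_{X^{r,1/2}_{per}}\|v\|_{H^{1/2}_tH^{s}_{per}}$, the transport estimate $\|\p_x(u\ov u)\|_{H^{-1/2}_tH^{s}_{per}}\lesssim \|u\|_{X^{r,1/2}_{per}}^2$, and the cubic estimate $\||u|^2u\|_{X^{r,-1/2}_{per}}\lesssim \|u\|_{X^{r,1/2}_{per}}^3$. The last follows from Bourgain's periodic $L^6$ bound and holds for $r\ge 0$. For the two genuine bilinear estimates the gain comes from resonance identities: writing $k=n+m,\ \sigma=\tau+\mu$, on the Fourier support of $uv$ one has
$$
(\sigma+k^2)-(\tau+n^2)-\mu=k^2-n^2=m(2n+m),
$$
while for $\p_x(u\ov u)$, with output frequency $k=n_1-n_2$ and $\sigma=\tau_1-\tau_2$,
$$
\sigma-(\tau_1+n_1^2)+(\tau_2+n_2^2)=-(n_1^2-n_2^2)=-k(n_1+n_2).
$$
Thus the largest of the three modulations dominates $|m(2n+m)|$ and $|k(n_1+n_2)|$ respectively, and this is the quantity that must be balanced against the loss of one derivative in $\p_x(u\ov u)$ and the regularity mismatch in $uv$. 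Decomposing dyadically in frequency and modulation and summing by Cauchy--Schwarz together with the periodic counting (divisor) bounds for the number of lattice points on the relevant quadratic level sets, one checks that the coupling estimate closes exactly when $r-1\le s\le r$, while the transport estimate closes exactly when $s\le 2r-1$ (the derivative costs the extra half-derivative, which is what forces $r\ge 1/2$ at the endpoint $s=0$). Combined with $s\ge 0$, these three requirements reproduce condition (\ref{local-theorem-periodic-a}).

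Finally, I would assemble the three multilinear bounds with the linear estimates to show that, on a ball of radius comparable to $\|(u_0,v_0)\|_{r\times s}$, the map $\Phi$ is a contraction once $T$ is small. A small power $T^{\theta}$ with $\theta>0$ is gained from the time-localization lemma for the difference of the $b$-indices (the cutoff $\eta_T$ permits replacing $b=1/2$ by $b=1/2-$ at the cost of $T^{\theta}$), which simultaneously supplies the contraction factor and determines the existence time $T=T(\|u_0\|_r,\|v_0\|_s)$. Uniqueness in $X^r_{per}\times Y^s_{per}$ and the local uniform continuity of the data-to-solution map then follow by rerunning the same multilinear estimates on the system satisfied by the difference of two solutions, yielding conclusions (a), (b) and the auxiliary-space uniqueness demanded by the definition of well-posedness.
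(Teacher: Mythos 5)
Your proposal follows essentially the same route as the paper: Duhamel formulation, contraction in the modified spaces $X^r_{per}\times Y^s_{per}$ (with the extra $\ell^2_nL^1_\tau$ pieces handling the endpoint $b=1/2$), the same two dispersive identities driving the bilinear estimates for $uv$ and $\p_x(|u|^2)$, Bourgain's $L^6$ bound for the cubic term, and the $T^{\theta}$ contraction factor from the time-localization lemma trading $b=1/2$ for $b=1/2-\theta$. The only slight discrepancy is bookkeeping: in the paper the upper bound $s\le r$ comes from the $\p_x(u\bar w)$ estimate (which requires $0\le s\le\min\{2r-1,r\}$) rather than from the coupling estimate (which requires only $r\ge 0$ and $\max\{0,r-1\}\le s$), but the union of constraints is the same condition (\ref{local-theorem-periodic-a}).
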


The proof of  Theorem \ref{local-theorem-periodic} is based on the
Banach fixed point theorem applied  on the integral formulation of
the system combined with new sharp periodic bilinear estimates, in
adequate mixed Bourgain spaces $X_{per}^{r,b_1} \times
H^{b_2}_tH^s_{per}$, for the coupling terms $uv$ and
$\p_x(|u|^2)$.

Also we find  a  region which the Cauchy problem is not locally
well-posed, more precisely we prove the following theorem:

\begin{theorem}\label{ill-posedness-theorem}
Let $\beta \neq 0$. Then for any $r<0$ and $s\in \R$, the initial
value problem (\ref{benney}) is locally ill-posed for data in
$H^{r}_{per}\times H^{s}_{per}$.
\end{theorem}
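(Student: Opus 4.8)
The plan is to prove ill-posedness by exhibiting an explicit two-parameter family of smooth exact solutions concentrated at a single high frequency and showing that along this family the data-to-solution map fails to be uniformly continuous on bounded sets of $H^r_{per}\times H^s_{per}$; this defeats condition (b) in the definition of well-posedness. The driving mechanism is the amplitude-dependent temporal phase produced by the cubic term $\beta|u|^2u$, which is exactly why the hypothesis $\beta\neq 0$ enters.

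First I would look for exact solutions of (\ref{benney}) of the form $v\equiv 0$ and
\begin{equation*}
u_N(x,t)=a\,e^{2\pi iNx}\,e^{-i(4\pi^2N^2+\beta a^2)t},\qquad N\in\Z,\ a>0.
\end{equation*}
Since $|u_N|^2=a^2$ is constant in $x$, the second equation gives $v_t=(|u_N|^2)_x=0$, so $v\equiv0$ is consistent, and substituting into the first equation (where $uv\equiv0$) one checks that the dispersion relation $\omega=-4\pi^2N^2-\beta a^2$ makes $(u_N,0)$ a global solution. These are smooth, hence by the uniqueness requirement (c) they must coincide, on any interval of existence, with the solutions furnished by a hypothetical well-posedness statement. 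The crucial feature is that the phase $-\beta a^2t$ depends nonlinearly on the amplitude.

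Next I would compare two such solutions sharing a high frequency $N$ with nearby real amplitudes $a_1=a$ and $a_2=a+\epsilon$. On $\T$ a single mode satisfies $\|a\,e^{2\pi iNx}\|_{r}=(1+|N|)^r|a|$, so I normalize by $a=(1+N)^{-r}$ (which grows, as $r<0$), giving $\|u_1(0)\|_r=1$ and $\|u_1(0)-u_2(0)\|_r=(1+N)^r\epsilon$. At time $t$ the difference is $e^{2\pi iNx}e^{-i4\pi^2N^2t}\big(a\,e^{-i\beta a^2t}-(a+\epsilon)e^{-i\beta(a+\epsilon)^2t}\big)$, whose $H^r_{per}$ norm is $(1+N)^r\,|a\,e^{-i\beta a^2t}-(a+\epsilon)e^{-i\beta(a+\epsilon)^2t}|$, and the relative phase between the two terms is $\beta\big((a+\epsilon)^2-a^2\big)t\approx2\beta a\epsilon\,t$. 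Given any target interval $[0,T]$, I choose $\epsilon\sim(1+N)^{r}/(|\beta|T)$ so that $|2\beta a\epsilon|\,T\approx\pi$; then at some $t_0\in(0,T]$ the two unit-modulus terms are antiphase, the bracket has modulus $\approx 2a$, and the $H^r_{per}$ norm of the difference is $\approx 2(1+N)^r a=2$. Meanwhile the initial separation is $(1+N)^r\epsilon\sim(1+N)^{2r}/(|\beta|T)\to0$ as $N\to\infty$, and $\epsilon/a\sim(1+N)^{2r}\to0$ justifies all the approximations. Because both $v$-components vanish identically, the difference in $H^s_{per}$ is zero for every $s$, which is precisely why the conclusion holds for all $s\in\R$.

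The main obstacle is the simultaneous bookkeeping: the data must remain bounded and grow arbitrarily close in $H^r_{per}$ while the solutions separate by an order-one amount at a time inside the prescribed interval. This is the Christ--Colliander--Tao decoherence scheme, and the negative regularity $r<0$ is exactly what permits the amplitude $a=(1+N)^{|r|}$ to grow enough to force an order-one phase drift within arbitrarily short times while keeping the $H^r_{per}$ data normalized. If $\beta=0$ the phase $-\beta a^2t$ disappears and the two solutions never separate, consistent with the hypothesis $\beta\neq0$; the coupling term $uv$ plays no role here, since the construction keeps $v\equiv0$ throughout.
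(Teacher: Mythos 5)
Your proposal is correct and follows essentially the same route as the paper: the paper's proof also exhibits the explicit plane--wave solutions $u_{N,a}=a e^{iNx}e^{-it(N^2+(\gamma+\beta)a^2)}$ with constant $v=\gamma a^2$, normalizes $a\sim (1+N^2)^{r/2}$, and uses the amplitude-dependent phase of the cubic term to decohere two nearby solutions within an arbitrarily short time, defeating uniform continuity of the data-to-solution map. The only difference is that the paper keeps $|\gamma|=(1+N^2)^{r}$ while you take $\gamma=0$, a harmless simplification that makes the $H^s_{per}$ component identically zero.
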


\begin{figure}[h]\label{Region-Periodic}
\includegraphics[width=6.9cm]{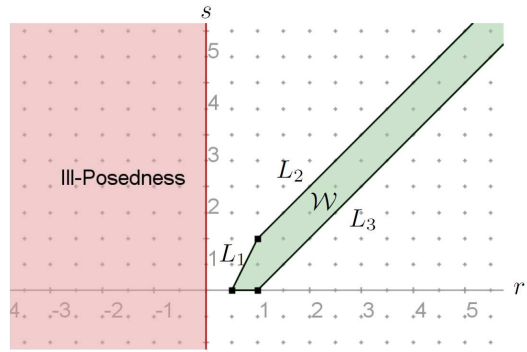}
\vspace{0.5cm} \caption{{\small Well-posedness results for periodic Benney
system. The region $\mathcal{W}$, limited by the lines $L_1:
s=2r-1$, $L_2: s=r$ and $L_3: s=r-1$, contain the indices $(r,s)$
where the local well-posedness is achieved in Theorem
\ref{local-theorem-periodic}.}}
\end{figure}

Regarding the stability of periodic travelling waves, namely,
solutions for (\ref{benney}) of the form
\begin{equation}\label{21}
\begin{cases}
         u(t,x)=e^{-i\omega
         t}e^{ic(x-ct)/2}\varphi_{\omega, c}(x-ct) \\
         v(x,t)=n_{\omega,c}(x-ct)
 \end{cases}
 \end{equation}
where $\varphi_{\omega,c}$, $n_{\omega,c}$ are real smooth,
$L$-periodic functions, $c>0$, and $\omega <0$, we have the
following definition.
\begin{definition}[Non-Linear Stability] The periodic traveling
wave $\Phi(\xi)=e^{ic\xi/2}\varphi_{\omega, c}(\xi)$,
$\Psi(\xi)=n_{\omega, c}(\xi)$, is orbitally stable in
$H^1_{per}([0,L])\times L^2_{per}([0,L])$ if for all
   $\varepsilon >0$, there exists $\delta >0$, such that if
   $||(u_0,v_0)-(\Phi,\Psi) ||_{H^1_{per}\times L^2_{per}}<\delta $ and $(u(t),v(t))$
   is the solution of (\ref{benney}) with
   $(u(0),v(0))=(u_0,v_0)$, then
     $$ \inf_{s\in [0,2\pi)} \inf_{r\in\Bbb R}
     ||(u(t),v(t))-(e^{is}\Phi(\cdot+r), \Psi(\cdot+r))||_{H^1_{per}\times L^2_{per}}<
     \varepsilon, \  \  \  \ t\in\Bbb R.
     $$
   Otherwise $(\Phi,\Psi)$ is called orbitally
   unstable.
  \end{definition}

We will show below that there exist a smooth explicit family of
profiles solutions of minimal period $L$,
$$
(\omega,c)\in \mathcal A_{\beta}\to (\varphi_{\omega, c},
n_{\omega,c}) \in H^r_{per}([0,L])\times H^s_{per}([0,L]),
$$
where $\mathcal A_{\beta}=\{(x,y): y>0, 1>\beta y, \ \ and\ \
x<-\frac{2\pi^2}{L^2}-\frac{y^2}{4}\}$ and which depends  of the
Jacobian elliptic function {\it dn} called {\it dnoidal}, more
precisely,
 \begin{equation}\label{psolu}
\begin{cases}
                \varphi_{\omega,c}(\xi)=\sqrt{\frac{c}{1-\beta c}}\;\eta_1
                dn\left( {\frac{\eta_1}{\sqrt{2}}}\xi ; \kappa
                \right) \\
                n_{\omega,c}(\xi)=-{\frac{\eta_1^2}{1-\beta c}}
                dn^2\left( {\frac{\eta_1}{\sqrt{2}}}\xi ; \kappa\right)
\end{cases}
\end{equation}
with $\eta_1=\eta_1(\omega,c)$ and $\kappa=\kappa(\omega,c)$,
being smooth functions of $\omega$ and $c$.

So, by following Angulo \cite{An} and Grillakis {\it et al.}
\cite{GrShSt1}, \cite{GrShSt2},  we obtain the following stability
theorem.

\begin{theorem}[Stability Theory]\label{stability theorem} Let
$(\omega,c)\in \mathcal A_\beta$ such that for $c>0$ there is
$q\in \mathbb N$ satisfying $4\pi q/{c}=L$. Define $\sigma\equiv
-\omega-\frac{c^2}{4}$.  Then
$\Phi(\xi)=e^{ic\xi/2}\varphi_{\omega,c}(\xi)$,
$\Psi(\xi)=n_{\omega,c}(\xi)$, with $\varphi_{\omega,c},
n_{\omega,c}$ given in (\ref{psolu}),
is orbitally stable in $H^1_{per}([0,L])\times L^2_{per}([0,L])$ by the
periodic flow generated by (\ref{benney}):
\begin{enumerate}
 \item[(a)] for $\beta \leq 0$,

 \item[(b)] for $\beta >0$ and $8\beta \sigma-3c(1-\beta c)^2\leq 0.$
 \end{enumerate}
\end{theorem}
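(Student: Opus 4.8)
The plan is to realize the travelling wave $(\Phi,\Psi)$ as a critical point of the augmented functional
$\mathcal{G}(u,v) = E_2(u,v) - \omega E_1(u) + \tfrac{c}{2}E_3(u,v)$
and then to run the abstract stability machinery of Grillakis--Shatah--Strauss \cite{GrShSt1,GrShSt2} together with the $dn$-spectral analysis of Angulo \cite{An}. First I would substitute the ansatz (\ref{21}) into (\ref{benney}): the second equation forces $n_{\omega,c}=-\tfrac1c\varphi_{\omega,c}^2$, which is precisely the relation between the two profiles in (\ref{psolu}), and the first equation collapses, after the $ic\varphi'$ terms generated by the boost factor $e^{ic\xi/2}$ cancel, to the scalar profile equation
\[
-\varphi_{\omega,c}''+\sigma\,\varphi_{\omega,c}+\Bigl(\beta-\tfrac1c\Bigr)\varphi_{\omega,c}^3=0,\qquad \sigma=-\omega-\tfrac{c^2}{4}>0 .
\]
A direct computation shows these two relations are exactly $\tfrac{\delta\mathcal{G}}{\delta\bar u}=0$ and $\tfrac{\delta\mathcal{G}}{\delta v}=0$, so $(\Phi,\Psi)$ is a critical point of $\mathcal{G}$; the commensurability hypothesis $4\pi q/c=L$ guarantees that $e^{ic\xi/2}$, and hence $\Phi$, is genuinely $L$-periodic. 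Since $c>0$ and $1-\beta c>0$ on $\mathcal A_\beta$, the cubic coefficient $\beta-\tfrac1c$ is negative and the positive dnoidal profile in (\ref{psolu}) indeed solves this equation.

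Next I would reduce the two-component problem to a scalar one. Because $\mathcal{G}$ is quadratic in $v$ with positive leading part $\tfrac{c}{2}\int|v|^2$, the variable $v$ is eliminated through its own Euler--Lagrange relation $v=-\tfrac1c|u|^2$; by a Schur-complement argument the inertia of the full Hessian $H=\mathcal{G}''(\Phi,\Psi)$ coincides with that of the reduced operator acting on $u$ alone. Gauging out the boost via $u=e^{ic\xi/2}w$ turns the latter into the standard linearized NLS operator about the real profile $\varphi_{\omega,c}$, which splits on real and imaginary parts as $\mathcal{L}_1=-\partial_\xi^2+\sigma+3(\beta-\tfrac1c)\varphi^2$ and $\mathcal{L}_2=-\partial_\xi^2+\sigma+(\beta-\tfrac1c)\varphi^2$. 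Here I would invoke the spectral theory for $dn$-type potentials: since $\varphi>0$ satisfies $\mathcal{L}_2\varphi=0$ it is the ground state, so $\mathcal{L}_2\ge0$ with simple kernel $\{\varphi\}$; differentiating the profile equation yields $\mathcal{L}_1\varphi'=0$, and because $\varphi'$ (a multiple of $sn\,cn\,dn$) has exactly two zeros per minimal period, Floquet theory for the associated Lam\'e operator identifies it as the second eigenfunction, whence $\mathcal{L}_1$ has exactly one negative eigenvalue and simple kernel $\{\varphi'\}$. Consequently $n(H)=n(\mathcal{L}_1)+n(\mathcal{L}_2)=1$, and $\ker H$ is spanned precisely by the phase and translation generators.

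With this spectral picture I would close the argument through $d(\omega,c)=\mathcal{G}(\Phi_{\omega,c},\Psi_{\omega,c})$. As $(\Phi,\Psi)$ is a critical point, only the explicit parameter dependence survives, giving $\partial_\omega d=-E_1(\Phi)$, $\partial_c d=\tfrac12 E_3(\Phi,\Psi)$, and hence a $2\times2$ Hessian $d''$ whose entries are the derivatives of the charge and momentum of the dnoidal profile in $(\omega,c)$. Since $\ker H$ reduces to the symmetry directions and $n(H)=1$, the Grillakis--Shatah--Strauss criterion asserts orbital stability exactly when $d''$ has a single positive eigenvalue, that is $\det d''<0$. I would then evaluate $E_1(\Phi)$ and $E_3(\Phi,\Psi)$ in closed form through complete elliptic integrals and differentiate using the smooth dependence of the amplitude $\eta_1(\omega,c)$ and modulus $\kappa(\omega,c)$ on the parameters.

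I expect the genuine obstacle to be this last computation. The quantities $\eta_1$ and $\kappa$ are coupled implicitly by the fixed-period constraint, so differentiating the elliptic integrals requires carefully propagating the derivatives of $\kappa(\omega,c)$ and $\eta_1(\omega,c)$ and reducing the resulting expressions via elliptic-integral identities. The outcome should be that the sign of $\det d''$ is governed, up to a positive factor, by the single quantity $8\beta\sigma-3c(1-\beta c)^2$. When $\beta\le0$ both summands are nonpositive, so the determinant condition holds automatically, which is case (a); when $\beta>0$ it must be imposed, and that is precisely the hypothesis $8\beta\sigma-3c(1-\beta c)^2\le0$ of case (b). Verifying that the elliptic-integral computation really collapses to this polynomial sign condition is the technical heart of the proof.
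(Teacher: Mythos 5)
Your proposal follows essentially the same route as the paper: the wave is realized as a critical point of $E-cQ_1-\omega Q_2$, the Hessian is reduced (your Schur complement is the paper's explicit completion of the square $\langle L_1y_1,y_1\rangle+\langle L_2y_2,y_2\rangle+\tfrac{c}{2}\int_0^L(z_2+\tfrac{2}{c}\varphi_{\omega,c}y_1)^2dx$) to the Lam\'e-type operators $L_1,L_2$, whose spectra give $n(H_{\omega,c})=1$, and the Grillakis--Shatah--Strauss criterion is closed by showing $\det d''<0$ through complete elliptic integrals. The one caveat is that for $\beta<0$ the determinant does not collapse purely to the sign of $8\beta\sigma-3c(1-\beta c)^2$: one term of the paper's expression $B(c,\omega,\kappa,\beta)$ is individually positive in that case and the inequality $0<\frac{(1-\kappa^2)K}{(2-\kappa^2)E}<\frac{1}{2}$ is needed to conclude, which is precisely the elliptic-integral bookkeeping you flagged as the technical heart.
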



\section{Local theory}  We prove Theorem
\ref{local-theorem-periodic} using the standard technique, that
is: we use the Duhamel integral formulation for the system
(\ref{benney}) combined with the Banach fixed point theorem in
adequate  Bourgain spaces $X_{per}^r\times Y_{per}^s$ with the
objective to get the desired solution. The main difficulty is the
necessity to prove two news mixed periodic bilinear estimates,
which we will prove in the following sections.

\subsection{\textbf{Sharp Periodic Bilinear Estimates}}
We begin recalling the following elementary inequalities, which
will be used in the proof of the next main estimates.

\begin{lemma}\label{Lemma-Calculus} Let $\theta_1, \theta_2 >0$
with  $\theta_1+\theta_2>1$ and $\lambda >1/2$. Then, there are a
positive constants $C_1$ and $C_2$ such that
\begin{enumerate}
\item [(a)] $\int\limits_{-\infty}\limits^{+\infty}\frac{dx}{\lan x-a\ran^{\theta_1}
\lan x-b\ran^{\theta_2}} \le \frac{C_1}{\lan a-b\ran^{\mu}},$
where $\mu:=\min\{\theta_1, \theta_2,\theta_1+\theta_2-1 \}$;
\vspace{0.5cm}
\item [(b)] $\sum\limits_{n\in \Z}\frac{1}{\lan n^2+an+b\ran^{\lambda}}\le C_2,$
with $a,\,b\in \R$.

\end{enumerate}
\end{lemma}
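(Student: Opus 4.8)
The plan is to prove the two inequalities separately, each by an elementary decomposition; part (a) is a convolution-type estimate and part (b) a lattice-point count, and the arguments are independent.

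For part (a), I would first normalize by the change of variables $x\mapsto x+b$, reducing the claim to the bound $\int_{\R}\lan x-c\ran^{-\theta_1}\lan x\ran^{-\theta_2}\,dx\le C\lan c\ran^{-\mu}$ with $c:=a-b$. The driving observation is that, by the triangle inequality, at every point one of the two factors is comparable to $\lan c\ran$: since $|x|+|x-c|\ge|c|$ we have $\max\{\lan x\ran,\lan x-c\ran\}\ge\tfrac12\lan c\ran$. Accordingly I would split $\R$ into the three regions $R_1=\{|x|\le|c|/2\}$, $R_2=\{|x-c|\le|c|/2\}$ and $R_3=\R\setminus(R_1\cup R_2)$, on each of which one factor is frozen at scale $\lan c\ran$ and the other is integrated.

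On $R_1$ one has $\lan x-c\ran\gtrsim\lan c\ran$, so the contribution is at most $C\lan c\ran^{-\theta_1}\int_{|x|\le|c|/2}\lan x\ran^{-\theta_2}\,dx$, which is $\lesssim\lan c\ran^{-\theta_1}$ when $\theta_2>1$ and $\lesssim\lan c\ran^{-(\theta_1+\theta_2-1)}$ when $\theta_2<1$; region $R_2$ is symmetric and produces the exponents $\theta_2$ and $\theta_1+\theta_2-1$. On $R_3$, where both $|x|$ and $|x-c|$ exceed $|c|/2$, I would split once more at $|x|=2|c|$: for $|x|>2|c|$ the two factors are comparable and the integrand is $\lesssim\lan x\ran^{-(\theta_1+\theta_2)}$, integrable because $\theta_1+\theta_2>1$ and yielding $\lan c\ran^{-(\theta_1+\theta_2-1)}$, while on the bounded remnant $\lan x\ran\sim\lan c\ran$ and the same exponent appears. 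Collecting the three contributions, every exponent produced is $\ge\mu=\min\{\theta_1,\theta_2,\theta_1+\theta_2-1\}$, which gives the estimate. The only delicate point is the endpoint $\theta_i=1$, where the one-dimensional integral throws off a factor $\log\lan c\ran$; since the exponents arising in the bilinear estimates are chosen strictly away from $1$, I would simply record the result for $\theta_1,\theta_2\neq1$.

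For part (b) I would complete the square, writing $f(n):=n^2+an+b=(n-n_0)^2+\delta$ with $n_0=-a/2$ and $\delta=b-a^2/4$, and reduce everything to a counting estimate that is uniform in $a,b$: for $t\ge1$ put $N(t)=\#\{n\in\Z:|f(n)|\le t\}$; then $N(t)\le C\sqrt t$ with $C$ independent of $n_0,\delta$. This follows by describing the admissible set, for $|f(n)|\le t$ confines $(n-n_0)^2$ to the interval $[\max\{0,|\delta|-t\},\,|\delta|+t]$, so $n-n_0$ lies in at most two intervals, each of length $\lesssim\sqrt t$ (the key algebraic identity being $\sqrt{|\delta|+t}-\sqrt{|\delta|-t}=2t/(\sqrt{|\delta|+t}+\sqrt{|\delta|-t})\lesssim\sqrt t$ in the deep case $|\delta|\ge t$), and each such interval contains $\lesssim\sqrt t+1$ integers. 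With this bound in hand I would sum dyadically, grouping the $n$ by $\lan f(n)\ran\sim 2^k$ so that there are at most $N(2^{k+1})\lesssim2^{k/2}$ terms of size $2^{-k\lambda}$ each:
\begin{equation*}
\sum_{n\in\Z}\frac{1}{\lan n^2+an+b\ran^{\lambda}}\le\sum_{k\ge0}N(2^{k+1})\,2^{-k\lambda}\lesssim\sum_{k\ge0}2^{k(1/2-\lambda)},
\end{equation*}
and the geometric series converges exactly because $\lambda>1/2$, with a bound independent of $a,b$. The main obstacle here is precisely this uniformity: one must verify that the $\sqrt t$ growth of $N(t)$ does not depend on the vertex location $n_0$ or on the shift $\delta$, including the regime $\delta<0$ where the parabola dips below zero and the level set $\{|f(n)|\le t\}$ breaks into two separate windows rather than a single interval. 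Once the uniform count is secured, the dyadic summation is routine.
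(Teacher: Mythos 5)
Your proof is correct, and it supplies more than the paper does: for Lemma~\ref{Lemma-Calculus} the authors give no argument at all, deferring entirely to \cite{KPV2} and \cite{ACM}, so your write-up is essentially the standard proof that those references contain. For part (a), the translation to put one singularity at the origin, the split into the two balls of radius $|c|/2$ around the singularities plus the complement (with a further split at $|x|=2|c|$), and the resulting case analysis of exponents correctly reproduce $\mu=\min\{\theta_1,\theta_2,\theta_1+\theta_2-1\}$. Your endpoint caveat is not pedantry: as literally stated the lemma fails when an exponent equals $1$ and the minimum is attained there (e.g.\ $\theta_1=\theta_2=1$ gives $\mu=1$ while the integral is of size $\log\lan c\ran/\lan c\ran$), so excluding $\theta_i=1$, or inserting a logarithmic factor, is genuinely needed. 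One small correction to your closing remark, though: it is not true that the exponents in the paper's bilinear estimates stay away from $1$ --- Lemma~\ref{Lemma-Calculus}(a) is invoked throughout Section 2 with the weight $\lan\tau_1+n_1^2\ran^{-1}$, i.e.\ $\theta_1=1$ and $\theta_2=1-2\theta<1$, which is exactly a logarithmically lossy configuration. The applications survive because the loss $\log\lan\tau+n_1^2\ran$ can be absorbed into the exponent $1-2\theta$ while keeping it above $1/2$, so the subsequent summation via part (b) still converges; but to be rigorous one should state (a) with the logarithm (or with an $\epsilon$ of room) in that regime. Part (b) is also correct: completing the square, the uniform level-set count $N(t)\lesssim\sqrt{t}$ --- including the two-window case $\delta<0$, which is the only place uniformity in $a,b$ could fail --- and the dyadic summation give convergence precisely for $\lambda>1/2$ with a constant independent of $a$ and $b$.
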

\begin{proof}
For details of the proof we refer, for instance,  the works
\cite{KPV2} and \cite{ACM}.
\end{proof}
\smallskip
\begin{lemma}\label{proposition-uv-periodic}
Let $0<\theta < 1/4$. Then, the following estimates
\begin{equation}\label{Estimativa-uv}
\|uv\|_{X_{per}^{r,-1/2}}\lesssim
\|u\|_{X_{per}^{r,1/2-\theta}}\|v\|_{H^{1/2}_t H_{per}^{s}}+
\|u\|_{X_{per}^{r,1/2}}\|v\|_{H^{1/2-\theta}_t H_{per}^{s}}
\end{equation}

\begin{equation}\label{Estimativa-uv-complement}
\left \|\lan n\ran^{r}\frac{\widehat{uv}(n,\tau)}{\lan \tau
+n^2\ran } \right \|_{\ell^2_nL^1_{\tau}}\lesssim
\|u\|_{X_{per}^{r,1/2-\theta}}\|v\|_{H^{1/2}_t H_{per}^{s}}+
\|u\|_{X_{per}^{r,1/2}}\|v\|_{H^{1/2-\theta}_t H_{per}^{s}}
\end{equation}

hold provided $r\ge 0$ and $\max\{0,\, r-1\}\le s$.
\end{lemma}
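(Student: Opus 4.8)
The plan is to prove both inequalities by duality together with the convolution identity $\widehat{uv}(n,\tau)=\sum_{n_1}\int\widehat u(n_1,\tau_1)\widehat v(n-n_1,\tau-\tau_1)\,d\tau_1$, reducing each to a trilinear bound. For (\ref{Estimativa-uv}) I would pair $\lan n\ran^{r}\lan\tau+n^2\ran^{-1/2}\widehat{uv}$ against a function $h\in\ell^2_nL^2_\tau$ with $\|h\|_{2}=1$, and set $f=\lan n_1\ran^{r}\lan\tau_1+n_1^2\ran^{1/2-\theta}|\widehat u|$ and $g=\lan n_2\ran^{s}\lan\tau_2\ran^{1/2}|\widehat v|$, so that $\|f\|_{2}=\|u\|_{X_{per}^{r,1/2-\theta}}$ and $\|g\|_{2}=\|v\|_{H^{1/2}_tH^{s}_{per}}$. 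Writing $n_2=n-n_1$ and $\tau_2=\tau-\tau_1$, (\ref{Estimativa-uv}) then amounts to the boundedness of
\[
\sum_{n,n_1}\iint \frac{\lan n\ran^{r}}{\lan n_1\ran^{r}\lan n_2\ran^{s}}\,\frac{f(n_1,\tau_1)\,g(n_2,\tau_2)\,h(n,\tau)}{\lan\tau+n^2\ran^{1/2}\,\lan\tau_1+n_1^2\ran^{1/2-\theta}\,\lan\tau_2\ran^{1/2}}\,d\tau_1\,d\tau
\]
by $\|f\|_{2}\|g\|_{2}$. The decisive structural remark is that the right-hand side of (\ref{Estimativa-uv}) is a \emph{sum of two terms}, in which the $\theta$-deficit in the modulation weight sits on $u$ in the first term and on $v$ in the second; the proof consists in selecting, region by region, the term whose \emph{full} modulation weight $\lan\cdot\ran^{1/2}$ lies on the factor that must absorb the resonance.

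The algebraic heart of the matter is the resonance identity
\[
(\tau+n^2)-(\tau_1+n_1^2)-\tau_2=n^2-n_1^2=n_2\,(2n_1+n_2),
\]
so that $\max\{\lan\tau+n^2\ran,\lan\tau_1+n_1^2\ran,\lan\tau_2\ran\}\gtrsim\lan n_2(2n_1+n_2)\ran$. I would split the region of summation according to which of the three modulations realizes this maximum, and in each case bound the spatial weight $\lan n\ran^{r}\lan n_1\ran^{-r}\lan n_2\ran^{-s}$. When $|n_1|\gtrsim|n_2|$ — in particular in the near-resonant zone $2n_1+n_2\approx0$, where $|n_1|\sim|n|\sim|n_2|$ — one has $\lan n\ran\lesssim\lan n_1\ran$, so this weight is $\lesssim\lan n_2\ran^{-s}\le1$ because $s\ge0$, and no resonance is needed. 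The delicate zone is $|n_2|\gg|n_1|$, where $v$ carries the output frequency ($n\sim n_2$), $2n_1+n_2\sim n_2$, the resonance is of size $\sim n^2$, and the weight grows like $\lan n\ran^{r-s}\le\lan n\ran$ since $s\ge r-1$.

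In this delicate zone the resonance, of size $\sim\lan n\ran^{2}$, is recorded in the dominant modulation, and the plan is to use the exponent $1/2$ of that modulation, giving a factor $\gtrsim\lan n\ran$, to cancel the derivative loss $\lan n\ran^{r-s}\le\lan n\ran$; equality at $s=r-1$ is what makes this threshold sharp. For the cancellation the dominant modulation must carry the full exponent $1/2$, which dictates the choice of term: if $\lan\tau_1+n_1^2\ran$ is largest I would pair against the \emph{second} term of (\ref{Estimativa-uv}) (full weight on $u$), if $\lan\tau_2\ran$ is largest against the first term (full weight on $v$), and if $\lan\tau+n^2\ran$ is largest either term serves, since $\lan\tau+n^2\ran^{-1/2}$ already supplies the gain. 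The surviving modulations are then integrated and summed by Lemma \ref{Lemma-Calculus}: part (a) carries out the $\tau$-integration and part (b) the ensuing quadratic sum in $n_1$, which converges precisely because $1-2\theta>1/2$ — this is where $0<\theta<1/4$ is used.

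I expect the main obstacle to be exactly this matching of regions to the two terms: at the endpoint $s=r-1$ a configuration with $u$ far off the Schr\"odinger curve (large $\lan\tau_1+n_1^2\ran$) while $v$ and the output lie on their curves makes the first term fail by a factor $\lan n\ran^{2\theta}$, and only the second term — where $u$ carries the full weight $\lan\tau_1+n_1^2\ran^{1/2}$ — closes the estimate; a symmetric treatment using a single term would not suffice. Finally, the companion bound (\ref{Estimativa-uv-complement}) follows from the same analysis: dualizing the $\ell^2_nL^1_\tau$ norm against a sequence $(b_n)\in\ell^2$ and integrating in $\tau$ produces the same trilinear form but now with the \emph{full} weight $\lan\tau+n^2\ran^{-1}$ in place of $\lan\tau+n^2\ran^{-1/2}$, and the identical region decomposition and choice of terms yield the result, again with the endpoint $s=r-1$ rescued by the second term.
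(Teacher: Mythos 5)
Your proposal follows essentially the same route as the paper: duality reducing to a trilinear form, the resonance identity $(\tau+n^2)-(\tau_1+n_1^2)-\tau_2=n^2-n_1^2$, a case split by which modulation is dominant and whether $|n|\lesssim|n_1|$ or $|n_1|\ll|n|$, the same rule of assigning to each region the term of the right-hand side whose full $1/2$-weight sits on the dominant modulation, and Lemma \ref{Lemma-Calculus} with $1-2\theta>1/2$ to close the sums; the paper's regions $A_1$, $A_2$, $A_{0,3}$ correspond exactly to your three dominant-modulation cases and use the same term selection. The only cosmetic differences are that in the region where $|\tau_2|$ dominates the paper's final sum is linear rather than quadratic in $n$ and is handled by a short Cauchy--Schwarz splitting (same condition $\theta<1/4$), and for (\ref{Estimativa-uv-complement}) the paper passes from $L^1_\tau$ to $L^2_\tau$ by Cauchy--Schwarz with a weight $\lan\tau+n^2\ran^{-a}$, $a\in(1/2,3/4-\theta)$, rather than by dualizing against $\ell^2_n$, but the region analysis is identical.
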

\begin{proof}

First we prove (\ref{Estimativa-uv}). We define
$f(n,\tau):=\lan\tau+n^2\ran^{b_1}\lan n\ran^r
\widehat{u}(n,\tau)$ and $g(n,\tau):= \lan\tau\ran^{b_2} \lan
n\ran^s \widehat{v}(n,\tau)$. Then, using duality arguments we
obtain
$$\|uv\|_{X_{per}^{r,-1/2}}=\sup \bigl \{W(\varphi): \|\varphi\|_{\ell^2_nL^2_{\tau}} \le 1\bigl \},$$
where,

{\small
\begin{equation}\label{e.uv-periodic-2}
W(\varphi)=\sum\limits_{(n, n_1)\in \Z^2} \int_{\R^2}
\frac{\lan\tau+n^2\ran^{-1/2} \lan n\ran^r f(n_1,\tau_1)
g(n-n_1,\tau -\tau_1) \varphi(n,\tau)}{\lan\tau_1+n_1^2\ran^{b_1}
\lan\tau -\tau_1\ran^{b_2}\lan n_1\ran^r \lan n-n_1\ran^s}d\tau
d\tau_1.
\end{equation}
}
We will divide the space $\mathbb{Z}^2\times \mathbb{R}^2$ in
three regions, namely $\mathbb{Z}^2\times \mathbb{R}^2 = A_0 \cup
A_1 \cup A_2$ and we separate the integral $W$ as follows:
\begin{equation}\label{e.uv-periodic-2}
W(\varphi)= W_0(\varphi) + W_1(\varphi)+ W_2(\varphi),
\end{equation}
where
$$W_j(\varphi)=\sum \int \sum \int \limits_{(n,n_1,\tau,\tau_1)\in A_j}
\frac{\lan\tau+n^2\ran^{-1/2} \lan n\ran^r f(n_1,\tau_1)
g(n-n_1,\tau -\tau_1) \varphi(n,\tau)}{\lan\tau_1+n_1^2\ran^{b_1}
\lan\tau -\tau_1\ran^{b_2}\lan n_1\ran^r \lan n-n_1\ran^s},$$ for
$j=0,1,2.$ It is easy to see that to obtain (\ref{Estimativa-uv})
is suffices to prove that whenever $r,\; s\ge 0$ and $r-s\le 1$
the estimate
\begin{equation}\label{e.uv-periodic-1}
W_j(\varphi)\lesssim \|f\|_{\ell^2_nL^2_{\tau}}
\|g\|_{\ell^2_nL^2_{\tau}} \|\varphi\|_{\ell^2_nL^2_{\tau}}
=\|u\|_{X_{per}^{b_1,r}}\|v\|_{H^{b_2}_t
H_{per}^{s}}\|\varphi\|_{\ell^2_nL^2_{\tau}},
\end{equation}
holds with  $b_1=1/2-\theta$ and $b_2=1/2$ or  with $b_1=1/2$ and
$b_2=1/2-\theta$. Indeed, next we will prove the following
estimates:
\begin{equation}\begin{split}\label{e.uv-periodic-1}
&W_j(\varphi)\lesssim \|u\|_{X_{per}^{1/2}}\|v\|_{H^{1/2-\theta}_t
H_{per}^{s}}\|\varphi\|_{\ell^2_nL^2_{\tau}},\; \text{for}\;
j=0,1,\\
&W_2(\varphi)\lesssim
\|u\|_{X_{per}^{1/2-\theta,r}}\|v\|_{H^{1/2}_t
H_{per}^{s}}\|\varphi\|_{\ell^2_nL^2_{\tau}}.
\end{split}\end{equation}
For this purpose, in region $A_0$ we integrate first over
$(n_1,\tau_1)$, in region $A_1$  we integrate first over
$(n,\tau)$ and in region $A_2$ we integrate first over
$(n_2,\tau_2)=(n-n_1,\tau-\tau_1)$; then using Cauchy-Schwarz
inequality we easily see that it remains only to bound uniformly
the following three expressions:

\begin{equation}\label{e.w1}
\widetilde{W}_0:=\sup\limits_{n,\tau} \frac{\lan
n\ran^{2r}}{\lan\tau+n^2\ran} \sum\limits_{n_1}\int \limits_{A_0}
\frac{d\tau_1}{ \lan\tau_1+n_1^2\ran \lan\tau_2\ran^{1-2\theta}
\lan n_1\ran^{2r} \lan n_2\ran^{2s}}
\end{equation}
\begin{equation}\label{e.w2}
\widetilde{W}_1:=\sup\limits_{n_1,\tau_1} \frac{1}{\lan
n_1\ran^{2r} \lan\tau_1+n_1^2\ran} \sum\limits_{n}\int
\limits_{A_1} \frac{\lan n\ran^{2r}d\tau}{ \lan\tau+n^2\ran
\lan\tau_2\ran^{1-2\theta} \lan n_2\ran^{2s}}
\end{equation}
\begin{equation}\label{e.w3}
\widetilde{W}_2:=\sup\limits_{n_2,\tau_2} \frac{1}{\lan
n_2\ran^{2s} \lan\tau_2\ran} \sum\limits_{n}\int \limits_{A_2}
\frac{\lan n\ran^{2r}d\tau}{ \lan\tau+n^2\ran
\lan\tau_1+n_1^2\ran^{1-2\theta} \lan n_1\ran^{2r}}
\end{equation}
Now we define the regions $A_0$, $A_1$ and $A_2$.  We use the
notation
\begin{equation}
\cL:=\max \Bigl\{\left |\tau+n^2\right|,
\left|\tau_1+n_1^2\right|, |\tau_2|\Bigl\}.
\end{equation}
and first we introduce the subsets:

{\small
\begin{equation}
\begin{split}
&A_{0,1}:=\left\{(n,n_1,\tau,\tau_1)\in\mathbb{Z}^2\times \mathbb{R}^2: |n|\le 100 \right\}, \\
&A_{0,2}:=\left\{(n,n_1,\tau,\tau_1)\in\mathbb{Z}^2\times \mathbb{R}^2: |n|> 100 \textrm{ and } |n|\le 2|n_1|\right\}, \\
&A_{0,3}:=\Bigl\{(n,n_1,\tau,\tau_1)\in\mathbb{Z}^2\times
\mathbb{R}^2: |n|> 100, |n_1|<|n|/2 \textrm{ and }
\cL=\left|\tau+n^2\right|\Bigl\}.
\end{split}
\end{equation}}
Then, we put {\small
\begin{equation}
\begin{split}
&A_0:=A_{0,1}\cup A_{0,2}\cup A_{0,3},\\
&A_1:=\Bigl \{(n,n_1,\tau,\tau_1)\in \Z^2 \times \R^2: |n|>100,
|n_1|<|n|/2 \textrm{ and } \cL=\left|\tau_1+n_1^2\right| \Bigl \},\\
&A_2:=\Bigl \{(n,n_1,\tau,\tau_1)\in\Z^2\times \R^2: |n|>100,
|n_1|<|n|/2 \textrm{ and } \cL=|\tau_2|\Bigl \}.
\end{split}
\end{equation}}

For later use, we recall that the dispersive relation of this
bilinear estimate is:
\begin{equation}\label{e.dispersive-uv-periodic}
\tau+n^2-(\tau_1+n_1^2)-\tau_2 = n^2-n_1^2,
\end{equation}
where $\tau -\tau_1=\tau_2\quad \text{and}\quad n-n_1=n_2.$

We begin with the analysis of (\ref{e.w1}). In the region
$A_{0,1}$, using that $|n|\lesssim 1$ and $r,\;s \ge0 $ we have
\begin{equation}\label{e.w1-A01}
\begin{split}
\widetilde{W}_{0,1}&:=\sup\limits_{n,\tau} \frac{\lan
n\ran^{2r}}{\lan\tau+n^2\ran} \sum\limits_{n_1}\int
\limits_{A_{0,1}}
\frac{d\tau_1}{\lan\tau_1+n_1^2\ran \lan\tau_2\ran^{1-2\theta} \lan n_1\ran^{2r} \lan n_2\ran^{2s}}\\
&\lesssim \sup \limits_{n,\tau} \frac{1}{\lan\tau+n^2\ran}\sum
\limits_{n_1}\int \limits_{-\infty}\limits^{+\infty}
\frac{d\tau_1}{\lan\tau_1+n_1^2\ran
\lan\tau_2\ran^{1-2\theta} \lan n_1\ran^{2r} \lan n_2\ran^{2s}}\\
&\lesssim \sup\limits_{\tau} \sum \limits_{n_1}
\frac{1}{\lan\tau+n_1^2\ran^{1-2\theta}} \lesssim 1,
\end{split}
\end{equation}
where in the last inequality we have used that $0< \theta < 1/4$
combined with {Lemma~\ref{Lemma-Calculus}}.

In the region $A_{0,2}$, we have that $\lan n\ran^{2r}\lesssim
\lan n_1\ran^{2r}$. Thus, similarly to the previous case, we get
\begin{equation}\label{e.w1-A02}
\begin{split}
\widetilde{W}_{0,2}&:=\sup\limits_{n,\tau} \frac{\lan
n\ran^{2r}}{\lan\tau+n^2\ran}
\sum\limits_{n_1}\int\limits_{A_{0,2}}
\frac{d\tau_1}{ \lan\tau_1+n_1^2\ran \lan\tau_2\ran^{1-2\theta} \lan n_1\ran^{2r} \lan n_2\ran^{2s}} \\
&\lesssim \sup\limits_{n,\tau} \frac{1}{\lan\tau+n^2\ran}
\sum\limits_{n_1}\int \limits_{-\infty}\limits^{+\infty}
\frac{d\tau_1}{ \lan\tau_1+n_1^2\ran
\lan\tau_2\ran^{1-2\theta} \lan n_2\ran^{2s}}\\
&\lesssim \sup\limits_{\tau} \sum\limits_{n_1}
\frac{1}{\lan\tau+n_1^2\ran^{1-2\theta}} \lesssim 1.
\end{split}
\end{equation}

In the region $A_{0,3}$ we have that $|n_1|<|n|/2$ and $|n|>100$,
which imply that $|n-n_1|\sim |n+n_1|\sim |n|.$ Moreover, the
dispersive relation~(\ref{e.dispersive-uv-periodic}) says that
$$\cL=|\tau+n^2|\gtrsim |n^2-n_1^2|=|n-n_1||n+n_1|\sim |n|^2.$$
Therefore,
\begin{equation}\label{e.w1-A03}
\begin{split}
\widetilde{W}_{0,3}&:=\sup\limits_{n,\tau} \frac{\lan
n\ran^{2r}}{\lan\tau+n^2\ran}
\sum\limits_{n_1}\int\limits_{A_{0,3}}
\frac{d\tau_1}{\lan\tau_1+n_1^2\ran \lan\tau_2\ran^{1-2\theta} \lan n_1\ran^{2r} \lan n_2\ran^{2s}} \\
&\lesssim \sup\limits_{n,\tau} \frac{\lan
n\ran^{2r-2s}}{\lan\tau+n^2\ran}
\sum\limits_{n_1}\int\limits_{-\infty}\limits^{+\infty}
\frac{d\tau_1}{\lan\tau_1+n_1^2\ran
\lan\tau_2\ran^{1-2\theta} }\\
&\lesssim \sup\limits_{n,\tau} \frac{\lan n \ran^{2r-2s}}{\lan
n\ran^2}\sum\limits_{n_1} \frac{1}{\lan\tau+n_1^2\ran^{1-2\theta}}
\lesssim 1,
\end{split}
\end{equation}
since $r \ge 0$, $r-s \le 1$ and $0<\theta < 1/4$.

Putting together the estimates (\ref{e.w1-A01}), (\ref{e.w1-A02})
and (\ref{e.w1-A03}) we conclude that
$$|\widetilde{W}_0|\le |\widetilde{W}_{0,1}|+|\widetilde{W}_{0,2}|+|\widetilde{W}_{0,3}|\lesssim 1,$$
obtaining the desired bounded  for (\ref{e.w1}).

Next we estimate the contribution of~(\ref{e.w2}). In the region
$A_1$, we know that $|n_1|<|n|/2$, $|n|>100$ and
$\cL=|\tau_1+n_1^2|$. So, $|n_2|\sim |n|$ and the dispersive
relation~(\ref{e.dispersive-uv-periodic}) implies that
$|\tau_1+n_1^2|\gtrsim n^2$. Thus,
\begin{equation*}
\begin{split}
\widetilde{W}_1&=\sup\limits_{n_1,\tau_1} \frac{1}{\lan
n_1\ran^{2r} \lan\tau_1+n_1^2\ran}
\sum\limits_{n}\int\limits_{A_1} \frac{\lan n\ran^{2r}}{
\lan\tau+n^2\ran \lan\tau_2\ran^{1-2\theta}
\lan n_2\ran^{2s}}d\tau\\
&\lesssim\sup\limits_{\tau_1}
\sum\limits_{n}\int\limits_{-\infty}\limits^{+\infty} \frac{\lan
n\ran^{2r-2s-2}}{ \lan\tau+n^2\ran \lan\tau_2\ran^{1-2\theta}}d\tau\\
&\lesssim\sup\limits_{\tau_1} \sum\limits_{n}
\frac{1}{\lan\tau_1+n^2\ran^{1-2\theta}}\lesssim 1,
\end{split}
\end{equation*}
since $r \geq 0$,  $r-s \le 1$ and $0<\theta <1/4$.

Finally, we bound~(\ref{e.w3}) by noting that, in the region $A_2$
it holds $|n|>100$,  $|n_1|<|n|/2$ and $\cL=|\tau_2|$. Then,
$|n_2|\sim |n|$ and the dispersive
relation~(\ref{e.dispersive-uv-periodic}) yield that
$|\tau_2|\gtrsim n^2$. Using these conditions and that $r\geq 0$,
$r-s \le 1$ we obtain
\begin{equation*}
\begin{split}
\widetilde{W}_2&=\sup\limits_{n_2,\tau_2} \frac{1}{\lan
n_2\ran^{2s} \lan\tau_2\ran} \sum\limits_{n}\int\limits_{A_2}
\frac{\lan n\ran^{2r}}{\lan\tau+n^2\ran
\lan\tau_1+n_1^2\ran^{1-2\theta} \lan n_1\ran^{2r}}d\tau\\
&\lesssim \sup\limits_{n_2,\tau_2}
\sum\limits_{n}\int\limits_{-\infty}\limits^{+\infty}\frac{\lan n
\ran^{2r-2s-2}}{\lan\tau+n^2\ran
\lan\tau_1+n_1^2\ran^{1-2\theta}}d\tau\\
&\lesssim \sup\limits_{n_2,\tau_2} \sum \limits_{n}\frac{1}{\lan
2n_2(n+\frac{\tau_2}{2n_2}-\frac{n_2}{2})\ran^{1-2\theta}}\\
&=\sup\limits_{n_2,\tau_2}\left \{\sum \limits_{n\in
H_1}\frac{1}{\lan
2n_2(n+\frac{\tau_2}{2n_2}-\frac{n_2}{2})\ran^{1-2\theta}} + \sum
\limits_{n\in H_2}\frac{1}{\lan
2n_2(n+\frac{\tau_2}{2n_2}-\frac{n_2}{2})\ran^{1-2\theta}}\right
\},
\end{split}
\end{equation*}
where {\small $$H_1:=\left \{n\in
\Z:\;\left|n+\frac{\tau_2}{2n_2}-\frac{n_2}{2}\right|< 2\right
\}\;\;\text{and}\;\; H_2:=\left\{n\in
\Z:\;\left|n+\frac{\tau_2}{2n_2}-\frac{n_2}{2}\right|\ge 2
\right\}.$$} Now we note that $\# H_1\le 4$ and for any $n\in H_2$
we have
$$\lan 2n_2(n+\tfrac{\tau_2}{2n_2}-\tfrac{n_2}{2})\ran \gtrsim
\lan n \ran \lan n+\tfrac{\tau_2}{2n_2}-\tfrac{n_2}{2}\ran,$$
since $|n_2|\sim|n|.$ Then, by  H\"older's inequality
\begin{equation*}\begin{split}
&\sum \limits_{n\in H_1}\frac{1}{\lan
2n_2(n+\frac{\tau_2}{2n_2}-\frac{n_2}{2})\ran^{1-2\theta}} + \sum
\limits_{n\in H_2}\frac{1}{\lan 2n_2(n+\frac{\tau_2}{2n_2}-\frac{n_2}{2})\ran^{1-2\theta}}\\
&\quad \le 4 + \sum \limits_{n\in H_2}\frac{1}{\lan n
\ran^{1-2\theta} \lan
n+\tfrac{\tau_2}{2n_2}-\tfrac{n_2}{2}\ran^{1-2\theta}}\\
&\quad 4+  \left(\sum \limits_{n}\frac{1}{\lan n
\ran^{2(1-2\theta)}}\right)^{1/2}\left(\sum
\limits_{n}\frac{1}{\lan
n+\tfrac{\tau_2}{2n_2}-\tfrac{n_2}{2}\ran^{2(1-2\theta)}}\right)^{1/2}\lesssim
1,
\end{split}\end{equation*}
since  and $0<\theta <1/4$. This completes the proof of
(\ref{Estimativa-uv}).

Next, we prove (\ref{Estimativa-uv-complement}). We let $a\in
(1/2,\;3/4-\theta)$. By using Cauchy-Schwarz inequality, we have
that
\begin{equation}\begin{split}\label{Estimativa-uv-complement-1}
\left \|\lan n\ran^{r}\frac{\widehat{uv}(n,\tau)}{\lan \tau
+n^2\ran } \right \|_{\ell^2_nL^1_{\tau}}^2 &\le  \sum \limits_
{n}\lan n\ran^{2r}
\Bigl\{\int\limits_{-\infty}^{+\infty}\frac{|\widehat{uv}(n,\tau)|^2}{\lan
\tau + n^2\ran^{2(1-a)}}d\tau
\int\limits_{-\infty}^{+\infty}\frac{d\tau}{\lan \tau +
n^2\ran^{2a}}\Bigl \}.
\end{split}\end{equation}
Now, we separate $\Z^2\times\R^2$ in the same regions used to
estimate (\ref{Estimativa-uv}) and we note that, except in the
region $A_{0,3}$, the right-hand of
(\ref{Estimativa-uv-complement-1}) can be estimated in the same
way that (\ref{Estimativa-uv}). To see this, we observe that the
integral $\int_{-\infty}^{+\infty}\frac{d\tau}{\lan \tau +
n^2\ran^{2a}}$ is convergent and we replace the term $\lan \tau +
n^2 \ran$ by $\lan \tau + n^2\ran^{2(1-a)}$ in (\ref{e.w1}),
(\ref{e.w2}) and (\ref{e.w3}), then we follows the same steps to
bound the corresponding expressions in each region, using that the
condition $2(1-a)+(1-2\theta)-1>1/2$ holds for $a\in
(1/2,\;3/4-\theta)$.

Now we proceed with the estimate of the right-hand of
(\ref{Estimativa-uv-complement-1}) in  $A_{0,3}$. Here, by using
the fact that $|\tau + n^2|\gtrsim |n|^2$ we have that
\begin{equation}\label{Estimativa-uv-complement-2}
\int\limits_{A_{0,3}}\frac{d\tau}{\lan \tau +
n^2\ran^{2a}}\lesssim \lan n \ran ^{2(1-2a)}.
\end{equation}
Then, using  (\ref{Estimativa-uv-complement-2}), we have
\begin{equation}\label{Estimativa-uv-complement-3}
\left \|\lan n\ran^{r}\frac{\widehat{uv}(n,\tau)}{\lan \tau
+n^2\ran } \chi_{A_{0,3}}\right \|_{\ell^2_nL^1_{\tau}}^2\lesssim
\widetilde{W}_{0,3}\|u\|_{X_{per}^{r,1/2}}^2\|v\|^2_{H^{1/2-\theta}_t
H_{per}^{s}},
\end{equation}
where
\begin{equation}
\widetilde{W}_{0,3}=\sup\limits_{n,\tau} \frac{\lan
n\ran^{2r}n^{2(1-2a)}}{\lan\tau+n^2\ran^{2(1-a)}}
\sum\limits_{n_1}\int\limits_{A_{0,3}}
\frac{d\tau_1}{\lan\tau_1+n_1^2\ran \lan\tau_2\ran^{1-2\theta}
\lan n_1\ran^{2r} \lan n_2\ran^{2s}}.
\end{equation}
Similarly to the estimate make in (\ref{e.w1-A03}) we obtain
\begin{equation}\label{Estimativa-uv-complement-4}
\begin{split}
\widetilde{W}_{0,3}&\lesssim \sup\limits_{n,\tau} \frac{\lan
n\ran^{2r-2s+2-4a}}{\lan\tau+n^2\ran^{2(1-a)}}
\sum\limits_{n_1}\int\limits_{-\infty}\limits^{+\infty}
\frac{d\tau_1}{\lan\tau_1+n_1^2\ran
\lan\tau_2\ran^{1-2\theta} }\\
&\lesssim \sup\limits_{n,\tau} \frac{\lan n
\ran^{2r-2s+2-4a}}{\lan n\ran^{4(1-a)}}\sum\limits_{n_1}
\frac{1}{\lan\tau+n_1^2\ran^{1-2\theta}} \lesssim 1,
\end{split}
\end{equation}
since $0<\theta<1/4$ and $r-s\le 1$. Finally, combining
(\ref{Estimativa-uv-complement-2}) and
(\ref{Estimativa-uv-complement-4}) we get
\begin{equation*}\label{Estimativa-uv-complement-1}
\left \|\lan n\ran^{r}\frac{\widehat{uv}(n,\tau)}{\lan \tau
+n^2\ran } \chi_{A_{0,3}}\right \|_{\ell^2_nL^1_{\tau}} \lesssim
\|u\|_{X_{per}^{r,1/2}}\|v\|_{H^{1/2-\theta}_t H_{per}^{s}},
\end{equation*}
as we desired. Then, we finished the proof of Lemma
\ref{proposition-uv-periodic}.
\end{proof}

The next result shows that the conditions obtained above for
indices $r$ and $s$ are necessary.
\begin{proposition} For any real numbers $b_1$ and  $b_2$, the veracity of the inequality
$$\|uv\|_{X^{r,-1/2}}\lesssim \|u\|_{X^{r,b_1}} \|v\|_{H_t^{b_2}
H_x^{s}}$$ implies that $\max\{0, r-1\}\le s$.
\end{proposition}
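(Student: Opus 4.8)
The plan is to prove necessity by producing explicit Fourier-localized counterexamples: assuming the inequality holds with some finite constant, I will exhibit, for each of the two forbidden regimes $s<0$ and $s<r-1$, a one-parameter family $(u_N,v_N)$ for which the ratio of the two sides diverges as $N\to\infty$. The key device is to localize $\widehat{u}$ and $\widehat{v}$ on unit boxes in $(n,\tau)$, each sitting at a single integer frequency and at \emph{modulation $O(1)$}: I take $\widehat{u}=\langle a\rangle^{-r}\chi_{B_1}$ with $B_1$ the unit box about $(n_1,\tau_1)=(a,-a^2)$, and $\widehat{v}=\langle b\rangle^{-s}\chi_{B_2}$ with $B_2$ the unit box about $(n_2,\tau_2)=(b,0)$. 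Because $|\tau_1+n_1^2|\lesssim 1$ on $B_1$ and $|\tau_2|\lesssim 1$ on $B_2$, the weights $\langle\tau_1+n_1^2\rangle^{b_1}$ and $\langle\tau-\tau_1\rangle^{b_2}$ are $\sim 1$, so that $\|u\|_{X^{r,b_1}}\sim 1$ and $\|v\|_{H^{b_2}_tH^s}\sim 1$ \emph{uniformly in $b_1,b_2$}. This modulation-$O(1)$ normalization is precisely what renders the conclusion independent of $b_1$ and $b_2$.

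First I would record the elementary convolution computation $\widehat{uv}=\widehat{u}\ast\widehat{v}$, which is $\gtrsim\langle a\rangle^{-r}\langle b\rangle^{-s}$ on a $\tau$-set of measure $\sim 1$ located at the single frequency $n=a+b$ and at $\tau\approx -a^2$. Evaluating the left-hand norm on this box and using the dispersive identity (\ref{e.dispersive-uv-periodic}), which gives $\tau+n^2\approx n^2-n_1^2=(a+b)^2-a^2=b(2a+b)$ there, I obtain
\[
\|uv\|_{X^{r,-1/2}}\gtrsim \langle a+b\rangle^{r}\,\langle a\rangle^{-r}\,\langle b\rangle^{-s}\,\langle b(2a+b)\rangle^{-1/2}.
\]
Since the right-hand side of the tested inequality equals $\|u\|_{X^{r,b_1}}\|v\|_{H^{b_2}_tH^s}\sim 1$, the inequality forces
\[
\langle a+b\rangle^{r}\,\langle a\rangle^{-r}\,\langle b\rangle^{-s}\,\langle b(2a+b)\rangle^{-1/2}\lesssim 1
\]
for every admissible pair of integers $(a,b)$.

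It then remains to specialize $(a,b)$. Choosing $(a,b)=(0,N)$ gives $b(2a+b)=N^2$, so the displayed bound reads $N^{\,r-s-1}\lesssim 1$, and letting $N\to\infty$ yields $s\ge r-1$. To capture $s\ge 0$ I would use the \emph{resonant} triple $(a,b)=(N,-2N)$, for which $2a+b=0$, so that the resonance function $b(2a+b)$ vanishes and $\langle\tau+n^2\rangle\sim 1$; since then $\langle a+b\rangle\sim\langle a\rangle\sim\langle b\rangle\sim N$, the bound collapses to $N^{-s}\lesssim 1$, forcing $s\ge 0$. Together these give $\max\{0,r-1\}\le s$. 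The one genuinely nonroutine step is the choice of the resonant configuration $2a+b=0$: it is what makes the left-hand modulation $O(1)$ and thereby converts the factor $\langle b\rangle^{-s}$ coming from the (cheap, when $s<0$) high frequency of $v$ into a clean divergence; away from resonance the weight $\langle b(2a+b)\rangle^{-1/2}$ would be of size $N^{-1}$ and would mask the effect. The remaining points — the support bookkeeping for $\widehat{u}\ast\widehat{v}$ and the uniform-in-$b_1,b_2$ evaluation of the right-hand norms — are routine.
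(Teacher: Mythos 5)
Your proof is correct and follows essentially the same route as the paper: the resonant configuration $(n_1,n_2)=(N,-2N)$ forcing $s\ge 0$ and the configuration with $u$ at frequency $0$ and $v$ at frequency $N$ forcing $s\ge r-1$ are exactly the paper's two examples, the only cosmetic difference being that you normalize the inputs so that both right-hand norms are $\sim 1$ and package the computation into a single lower bound $\langle a+b\rangle^{r}\langle a\rangle^{-r}\langle b\rangle^{-s}\langle b(2a+b)\rangle^{-1/2}\lesssim 1$. (Incidentally, your second example reproduces the norms the paper asserts, whereas the paper's printed definition of $\alpha_2,\beta_2$ appears to have the roles of $u$ and $v$ swapped.)
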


\begin{proof} Firstly, we fix $N\gg 1$ a large integer and define
de sequences
$$
\alpha_1(n)=
\begin{cases}
1 & \textrm{if $n=N$},\\
0 & \textrm{otherwise}
\end{cases}
\quad \text{and} \quad \beta_1(n)=
\begin{cases}
1 & \textrm{if $n=-2N$},\\
0 & \textrm{otherwise}.
\end{cases}
$$
Let $u_{1_N}(x,t)$ and $v_{1_N}(x,t)$ be given by
$\widehat{u}_{1_N}(n,\tau)=\alpha_1(n)\chi_{[-1,1]}(\tau+n^2)$\;
and\; $\widehat{v}_{1_N}(n,\tau)=\beta_1(n)\chi_{[-1,1]}(\tau)$.
Taking into account the dispersive relation
$$\tau+n^2-(\tau_1+n_1^2)-\tau_2 = n^2-n_1^2,$$ we can easily
compute that
\begin{equation*}
\|u_{1_N}v_{1_N}\|_{X^{r,-1/2}} \sim N^r,\quad
\|u_{1_N}\|_{X^{r,b_1}} \sim N^r\quad  \text{and}\quad
\|v_{1_N}\|_{H_t^{b_2}H_x^s} \sim N^s
\end{equation*}
Hence, from the bound $\|u_{1_N}v_{1_N}\|_{X^{r,-1/2}}\lesssim
\|u_{1_N}\|_{X^{r,b_1}} \|v_{1_N}\|_{H_t^{b_2} H_x^{s}}$ we must
have  $N^r\lesssim N^{r+s}$ for $N \gg 1$, which implies that
$s\geq 0$.

Secondly, we define the sequences
$$
\alpha_2(n)=
\begin{cases}
1 & \textrm{if $n=N$},\\
0 & \textrm{otherwise}
\end{cases}
\quad \text{and}\quad \beta_2(n)=
\begin{cases}
1 & \textrm{if $n=0$},\\
0 & \textrm{otherwise}.
\end{cases}
$$
Let $\widehat{u}_{2_N}(n,\tau)=\alpha_2(n)\chi_{[-1,1]}(\tau+n^2)$
and $\widehat{v}_{2_N}(n,\tau)=\beta_2(n)\chi_{[-1,1]}(\tau)$.
Again, it is easy to see that
$$
\|u_{2_N}v_{2_N}\|_{X^{r,-1/2}} \sim N^{r-1},\quad
\|u_{2_N}\|_{X^{r,b_1}} \sim 1 \quad \text{and}\quad
\|v_{2_N}\|_{H_t^{b_2}H_x^s} \sim N^s
$$
Hence, the bound $\|u_{2_N}v_{2_N}\|_{X^{r,-1/2}}\lesssim
\|u_{2_N}\|_{X^{r,b_1}} \|v_{2_N}\|_{H_t^{b_2} H_x^{s}}$ implies
$N^{r-1}\lesssim N^s$ for $N \gg 1$, so we must have $r-1\le s$.

\end{proof}

\begin{lemma}\label{p.derivate-u2-periodic} Let $0<\theta < 1/4$. Then,
the following estimates
\begin{equation}\label{Estimativa-Derivada-u1u2}
\|\p_x(u\bar{w})\|_{H_t^{-1/2} H_{per}^s}\lesssim
\|u\|_{X_{per}^{r,1/2-\theta}}\|w\|_{X_{per}^{r,1/2}}
+\|u\|_{X_{per}^{r,1/2}}\|w\|_{X_{per}^{r,1/2-\theta}}
\end{equation}
{\small
\begin{equation}\label{Estimativa-Derivada-u1u2-complement}
\left \|\lan n\ran^{s}\frac{\widehat{\p_x(u\bar{w})}(n,\tau)}{\lan
\tau \ran }\right \|_{\ell^2_nL^1_{\tau}}\lesssim
\|u\|_{X_{per}^{r,1/2-\theta}}\|w\|_{X_{per}^{r,1/2}}
+\|u\|_{X_{per}^{r,1/2}}\|w\|_{X_{per}^{r,1/2-\theta}}
\end{equation}}
hold provided $0\le s\le \min\{2r-1,\; r\}$.
\end{lemma}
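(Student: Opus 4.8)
The plan is to prove \eqref{Estimativa-Derivada-u1u2} and \eqref{Estimativa-Derivada-u1u2-complement} by mirroring the argument of Lemma~\ref{proposition-uv-periodic}, the essential new feature being the derivative $\p_x$, which in Fourier variables contributes a factor $|n|$ and hence an effective spatial weight $\lan n\ran^{s+1}$ on the output. First I would pass to the dual formulation. Writing $f(n,\tau)=\lan\tau+n^2\ran^{b_1}\lan n\ran^r\widehat u(n,\tau)$ and $g(n,\tau)=\lan\tau+n^2\ran^{b_2}\lan n\ran^r\widehat w(n,\tau)$, replacing all transforms by their absolute values, and setting $n_2=n_1-n$, $\tau_2=\tau_1-\tau$, estimate \eqref{Estimativa-Derivada-u1u2} reduces to showing that
\begin{equation*}
W(\varphi)=\sum_{(n,n_1)\in\Z^2}\int_{\R^2}\frac{|n|\,\lan n\ran^{s}\,f(n_1,\tau_1)\,g(n_2,\tau_2)\,\varphi(n,\tau)}{\lan\tau\ran^{1/2}\,\lan\tau_1+n_1^2\ran^{b_1}\lan\tau_2+n_2^2\ran^{b_2}\lan n_1\ran^r\lan n_2\ran^r}\,d\tau\,d\tau_1
\end{equation*}
is $\lesssim\|f\|_{\ell^2_nL^2_\tau}\|g\|_{\ell^2_nL^2_\tau}\|\varphi\|_{\ell^2_nL^2_\tau}$ uniformly over $\|\varphi\|_{\ell^2_nL^2_\tau}\le1$, with $(b_1,b_2)$ equal to $(1/2-\theta,1/2)$ or $(1/2,1/2-\theta)$ according to the two terms on the right of \eqref{Estimativa-Derivada-u1u2}. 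The governing resonance identity is now
\begin{equation*}
(\tau_1+n_1^2)-(\tau_2+n_2^2)-\tau=n_1^2-n_2^2=n\,(n_1+n_2),
\end{equation*}
so that $\cL:=\max\{\lan\tau\ran,\lan\tau_1+n_1^2\ran,\lan\tau_2+n_2^2\ran\}\gtrsim|n|\,|n_1+n_2|$. In contrast with Lemma~\ref{proposition-uv-periodic}, here the output carries the transport weight $\lan\tau\ran$ while \emph{both} inputs carry Schr\"odinger weights.

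Next I would decompose $\Z^2\times\R^2$ exactly as before: a region $|n|\le 100$, where the absence of derivative loss makes the bound immediate from Lemma~\ref{Lemma-Calculus}; and, for $|n|>100$, the subregions distinguished by which modulation realizes $\cL$ and by whether $|n_1|\sim|n_2|$ or one of them dominates. In each region one integrates first in the variable dual to the two factors to be paired, applies Cauchy--Schwarz, and is reduced to bounding uniformly three supremum expressions of the type \eqref{e.w1}--\eqref{e.w3}; the $\tau$-integrals are handled by Lemma~\ref{Lemma-Calculus}(a) with exponent $\mu=1-2\theta>1/2$ and the $n$-sums by Lemma~\ref{Lemma-Calculus}(b). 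As in Lemma~\ref{proposition-uv-periodic}, the $\theta$-loss is placed on whichever input does \emph{not} realize $\cL$, which is why two terms appear on the right-hand side. Two regimes produce the sharp constraints. In the \emph{balanced} interaction $|n_1|\sim|n_2|\sim|n|$ one has $\lan n_1\ran^{r}\lan n_2\ran^{r}\gtrsim\lan n\ran^{2r}$, so the effective factor $\lan n\ran^{s+1}/(\lan n_1\ran^{r}\lan n_2\ran^{r})\lesssim\lan n\ran^{s+1-2r}\lesssim1$ precisely when $s\le 2r-1$. In the \emph{unbalanced} interaction, where one input frequency is comparable to $|n|$ and the other is bounded, one has $|n_1+n_2|\sim|n|$, hence $\cL\gtrsim n^2$; extracting the gain $\cL^{-1/2}\lesssim\lan n\ran^{-1}$ turns the residual factor $\lan n\ran^{s+1-r}$ into $\lan n\ran^{s-r}\lesssim1$ exactly when $s\le r$. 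Whenever the summation variable enters a modulation linearly with large coefficient (as in the passage leading to \eqref{e.w3}), I would split the sum by the size of the shifted argument (the $H_1$/$H_2$ dichotomy) and close it via the factorization $\lan 2n(\cdots)\ran\gtrsim\lan n_1\ran\lan\cdots\ran$, legitimate because $|n|\sim|n_1|$, followed by Cauchy--Schwarz; this last step converges precisely because $2(1-2\theta)>1$, i.e. $\theta<1/4$.

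For the companion estimate \eqref{Estimativa-Derivada-u1u2-complement} I would follow the device used for \eqref{Estimativa-uv-complement}: fix $a\in(1/2,\,3/4-\theta)$ and apply Cauchy--Schwarz in $\tau$ to peel off the $L^1_\tau$ norm, which replaces the weight $\lan\tau\ran$ by $\lan\tau\ran^{2(1-a)}$ and leaves a convergent factor $\int\lan\tau\ran^{-2a}\,d\tau$. Outside the region where $\lan\tau\ran$ is maximal the regional analysis above applies verbatim, the inequality $2(1-a)+(1-2\theta)-1>1/2$ (valid for $a<3/4-\theta$) supplying the summability. In the remaining region, where $\lan\tau\ran\gtrsim n^2$, I would use $\int\lan\tau\ran^{-2a}\,d\tau\lesssim\lan n\ran^{2(1-2a)}$ to recover a supremum expression already controlled under $s\le 2r-1$, exactly as \eqref{Estimativa-uv-complement-2}--\eqref{Estimativa-uv-complement-4} did for the previous lemma.

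The main obstacle is the derivative loss carried by $\p_x$ in the \emph{balanced} high-frequency interaction under near-cancellation $|n_1+n_2|\ll|n|$, where the resonance $n(n_1+n_2)$ is small and no single modulation is large. There the full factor $|n|\sim\lan n\ran$ must be recovered from the spatial decay $\lan n_1\ran^{-r}\lan n_2\ran^{-r}$ alone, which forces the endpoint $s=2r-1$ and leaves nothing to spend on $\cL$. Verifying that the residual $\tau$-integration and $n$-summation still converge at this endpoint is exactly where $\theta<1/4$ is indispensable: the summability comes not from a single modulation but from the factorization step of the $H_1$/$H_2$ argument, whose $\ell^2$-sum is finite precisely when $2(1-2\theta)>1$; the same $\theta$-room, through $\mu=1-2\theta>1/2$, keeps each application of Lemma~\ref{Lemma-Calculus} admissible.
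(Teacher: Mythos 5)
Your proposal is correct and follows essentially the same route as the paper: duality, the resonance identity $n_1^2-n_2^2$, the same case split (low output frequency; balanced $|n_1|\sim|n_2|$ giving $s\le 2r-1$; unbalanced with a large modulation giving $s\le r$), the $H_1/H_2$ splitting for linear-in-$n$ modulations, and the Cauchy--Schwarz device with $a\in(1/2,3/4-\theta)$ for the $\ell^2_nL^1_\tau$ companion estimate. The only quibbles are descriptive: in the region where $\lan\tau\ran$ realizes $\cL$ the active constraint is $s\le r$ (not $s\le 2r-1$), and in the balanced endpoint case the $n$-sum closes via Lemma~\ref{Lemma-Calculus}(b) applied to a quadratic phase rather than via the $H_1$/$H_2$ factorization --- neither affects the argument since $\theta<1/4$ is the operative condition in both mechanisms.
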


\begin{proof}
The proof is similar to Lemma (\ref{proposition-uv-periodic}).
Here, the relevant dispersive relation is given by
\begin{equation}\label{e.u2-dispersion}
(\tau_1+n_1^2) + (\tau_2-n_2^2) -\tau= n_1^2-n_2^2,
\end{equation}
where $\tau _2= \tau-\tau_1$ and $n_2=n-n_1$.

To prove (\ref{Estimativa-Derivada-u1u2}), by duality arguments,
it suffices to bound uniformly the follo\-wing expressions:

\begin{align}
&\label{e.u2-2}Z_0=\sup\limits_{n_1,\tau_1} \frac{1}{\lan n_1
\ran^{2r} \lan \tau_1+n_1^2 \ran} \sum\limits_{n}\int\limits_{C_0}
\frac{|n|^2\lan n \ran^{2s}}{\lan \tau \ran \lan
\tau_2-n_2^2\ran^{1-2\theta} \lan n_2 \ran^{2r}}d\tau ,\\
&\label{e.u2-1} Z_1=\sup\limits_{n,\tau} \frac{|n|^2\lan n
\ran^{2s}}{\lan \tau \ran} \sum\limits_{n_1} \int\limits_{C_1}
\frac{d\tau_1}{\lan \tau_1+n_1^2 \ran \lan \tau_2-n_2^2
\ran^{1-2\theta} \lan n_1 \ran^{2r} \lan n_2 \ran^{2r}},\\
&\label{e.u2-3} Z_2=\sup\limits_{n_2,\tau_2} \frac{1}{\lan n_2
\ran^{2r} \lan \tau_2-n_2^2 \ran} \sum\limits_{n}\int\limits_{C_2}
\frac{|n|^2\lan n \ran^{2s}}{\lan \tau \ran \lan \tau_1+n_1^2
\ran^{1-2\theta}\lan n_1 \ran^{2r}}d\tau,
\end{align}
where $C_0$, $C_1$ and $C_2$ are defined as follows. We denote by
$$\cL:= \max \bigl \{|\tau|, |\tau_1+n_1^2|,|\tau_2-n_2^2| \bigl \}$$
and then we define de following sets:
\begin{equation*}
\begin{split}
&C_{0,1}:=\{(n,\tau,n_1,\tau_1): |n|\le 100\}, \\
&C_{0,2}:=\left \{(n,\tau,n_1,\tau_1): |n| > 100, \tfrac{|n_2|}{2}\le |n_1|\le 2|n_2|\right\}, \\
&C_{0,3}:=\left \{(n,\tau,n_1,\tau_1): |n|>100, |n_1| <
\tfrac{|n_2|}{2}\; \text{or}\; |n_2| < \tfrac{|n_1|}{2} \textrm{
and } \cL=|\tau_1+n_1^2|\right\}.
\end{split}
\end{equation*}
Now we put
\begin{equation*}
\begin{split}
&C_0:=C_{0,1} \cup C_{0,2}\cup C_{0,3},\\
&C_1:= \left \{(n,\tau,n_1,\tau_1): |n|>100,
|n_1|<\tfrac{|n_2|}{2}\; \text{or}\; |n_2| < \tfrac{|n_1|}{2}
\textrm{
and}\; \cL=|\tau|\right\},\\
&C_2:=\left\{(n,\tau,n_1,\tau_1): |n|>100,
|n_1|<\tfrac{|n_2|}{2}\; \text{or}\; |n_2| < \tfrac{|n_1|}{2}
\textrm{ and } \cL=|\tau_2-n_2^2|
 \right\}.
\end{split}
\end{equation*}

Now, we bound~(\ref{e.u2-2}). In the region $C_{0,1}$, it holds
$|n|\le 100$. Hence,
\begin{equation*}
\begin{split}
Z_{0,1}&:=\sup\limits_{n_1,\tau_1} \frac{1}{\lan n_1 \ran^{2r}
\lan \tau_1+n_1^2 \ran} \sum\limits_{n}\int\limits_{C_{0,1}}
\frac{|n|^2\lan n \ran^{2s}}{\lan \tau \ran \lan
\tau_2-n_2^2\ran^{1-2\theta} \lan n_2 \ran^{2r}}d\tau\\
&\lesssim \sup\limits_{n_1,\tau_1} \sum\limits_{|n|\le 100}\;\int
\limits_{-\infty}^{+\infty}
\frac{d\tau }{\lan \tau \ran \lan \tau_2-n_2^2\ran^{1-2\theta} }\\
&\lesssim \sup\limits_{n_1,\tau_1} \sum\limits_{|n|\le
100}\frac{1}{\lan \tau_1 + (n-n_1)^2\ran^{1-2\theta}} \lesssim 1,
\end{split}
\end{equation*}
since $r \ge 0$ and $1-2\theta>0.$

In the region $C_{0,2}$, we have that $|n_1|\sim |n_2|$. Hence,
\begin{equation*}
\begin{split}
Z_{0,2}&:=\sup\limits_{n_1,\tau_1} \frac{1}{\lan n_1 \ran^{2r}
\lan \tau_1+n_1^2 \ran} \sum\limits_{n}\int\limits_{C_{0,2}}
\frac{|n|^2\lan n \ran^{2s}}{\lan \tau
\ran \lan \tau_2-n_2^2\ran^{1-2\theta} \lan n_2 \ran^{2r}}d\tau\\
& \lesssim \sup\limits_{n_1,\tau_1} \frac{\lan n_1
\ran^{2s-4r+2}}{\lan \tau_1+n_1^2 \ran}
\sum\limits_{n}\int\limits_{-\infty}^{+\infty} \frac{d\tau}{\lan
\tau
\ran \lan \tau_2-n_2^2\ran^{1-2\theta} } \\
&\lesssim \sup\limits_{n_1,\tau_1} \sum\limits_{n}\frac{1}{\lan
\tau_1 + (n-n_1)^2\ran^{1-2\theta}} \lesssim 1,
\end{split}
\end{equation*}
for $0\le s\leq 2r-1$ and $0<\theta < 1/4$.

In the region $C_{0,3}$, the dispersion
relation~(\ref{e.u2-dispersion})and the assumptions $|n_1|\nsim
|n_2|$, $|n|\ge 100$ and $\cL=|\tau_1+n_1^2|$ imply that
$|\tau_1+n_1^2| \gtrsim (\max\{|n_1|, |n_2|\})^2$. Then,
\begin{equation*}
\begin{split}
Z_{0,3}&:=\sup\limits_{n_1,\tau_1} \frac{1}{\lan n_1 \ran^{2r}
\lan \tau_1+n_1^2 \ran} \sum\limits_{n}\int\limits_{C_{0,3}}
\frac{|n|^2\lan n \ran^{2s}}{\lan \tau
\ran \lan \tau_2-n_2^2\ran^{1-2\theta} \lan n_2 \ran^{2r}}d\tau\\
&\lesssim \sup\limits_{n_1,\tau_1}
\sum\limits_{n}\int\limits_{-\infty}^{+\infty}
\frac{\bigl\lan\max\{|n_1|, |n_2|\} \bigl\ran^{2s-2r}}{\lan \tau
\ran \lan \tau_2-n_2^2\ran^{1-2\theta}}d\tau\\
&\lesssim \sup\limits_{n_1,\tau_1} \sum\limits_{n}\frac{1}{\lan
\tau_1 + (n-n_1)^2\ran^{1-2\theta} } \lesssim 1,
\end{split}
\end{equation*}
for  $0\le s \le r$ and $0<\theta < 1/4$. Then, the inequality
$|Z_0|\le |Z_{0,1}|+|Z_{0,2}|+ |Z_{0,3}|\lesssim 1$ yields the
desired estimate for $Z_0$.

The contribution of~(\ref{e.u2-1}) can be estimated as follows. In
the region $C_1$, we have that $|n|\sim \max\{|n_1|, |n_2|\}$ and
$|\tau|\geq (\max\{|n_1|, |n_2|\})^2$. Thus,
\begin{equation*}
\begin{split}
Z_1&\le \sup\limits_{n,\tau} \frac{\lan n \ran^{2s+2}}{\lan \tau
\ran} \sum\limits_{n_1} \int\limits_{C_1}\frac{d\tau_1}{\lan
\tau_1+n_1^2 \ran
\lan \tau_2-n_2^2 \ran^{1-2\theta} \lan n_1 \ran^{2r} \lan n_2 \ran^{2r}}\\
&\lesssim \sup\limits_{n,\tau}  \sum\limits_{n_1}
\int\limits_{-\infty}^{\infty} \frac{ \bigl\lan\max\{|n_1|,
|n_2|\} \bigl\ran^{2s-2r}}{\lan \tau_1+n_1^2 \ran
\lan \tau_2-n_2^2 \ran^{1-2\theta}}d\tau_1\\
&\lesssim \sup\limits_{n,\tau} \sum\limits_{n_1} \frac{1}{\lan
\tau + n_1^2 - n_2^2 \ran^{1-2\theta}}\\
&\lesssim \sup\limits_{n,\tau} \sum\limits_{n_1} \frac{1}{\lan
2nn_1 +\tau - n^2 \ran^{1-2\theta}}\lesssim 1,
\end{split}
\end{equation*}
for $0\le s\le r$ and $0<\theta<1/4,$ using the same arguments to
estimate $\widetilde{W}_2$ in Lemma \ref{proposition-uv-periodic}.

On the other hand, the expression~(\ref{e.u2-3}) can be controlled
by using that in the region $C_2$ hold $|n|\sim \max\{|n_1|,
|n_2|\}$ and $|\tau_2-n_2^2|\gtrsim (\max\{|n_1|, |n_2|\})^2$.
Then,
\begin{equation*}
\begin{split}
Z_2&= \sup\limits_{n_2,\tau_2}\frac{1}{\lan n_2 \ran^{2r}\lan
\tau_2-n_2^2\ran} \sum\limits_{n}\int \limits_{C_2}
\frac{|n|^2\lan n \ran^{2s}}{\lan
\tau \ran \lan \tau_1+n_1^2 \ran^{1-2\theta}\lan n_1 \ran^{2r}}d\tau\\
&\lesssim
\sup\limits_{n_2,\tau_2}\sum\limits_{n}\int\limits_{-\infty}^{+\infty}
\frac{\bigl\lan\max\{|n_1|, |n_2|\} \bigl\ran^{2s-2r}}{\lan
\tau \ran \lan \tau_1+n_1^2 \ran^{1-2\theta}}d\tau\\
&\lesssim \sup\limits_{n_2,\tau_2} \sum\limits_{n} \frac{1}{\lan
(n+n_2)^2 -\tau_2 \ran^{1-2\theta}} \lesssim 1,
\end{split}
\end{equation*}
for $s\le r$ and $0<\theta<1/4$. Collecting all the estimates
above we obtain the claimed estimate
(\ref{Estimativa-Derivada-u1u2}).

The prove of (\ref{Estimativa-Derivada-u1u2-complement}) follows
from a similar way to the proof of
(\ref{Estimativa-uv-complement}).
\end{proof}

Now we exhibit examples showing the necessity of the conditions
for $r$ and $s$ used in Lemma \ref{p.derivate-u2-periodic}.
\begin{proposition} For any real numbers $b_1$ and $b_2$ the veracity of
the inequality $$\|\p_x(u \bar{w})\|_{H_t^{-1/2}
H_{per}^s}\lesssim \|u\|_{X^{r,b_1}} \|w\|_{X^{r,b_2}}$$ implies
that $s\leq \min \{2r-1,\;r\}$.
\end{proposition}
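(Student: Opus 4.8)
The plan is to argue by contradiction through explicit test functions, exactly in the spirit of the preceding proposition: for each of the two required inequalities, $s\le 2r-1$ and $s\le r$, I would exhibit a one-parameter family of pairs $(u_N,w_N)$ concentrated at single spatial frequencies and supported within distance $O(1)$ of the relevant characteristic curves, compute each of the three norms as an explicit power of $N$, and read off the constraint forced by letting $N\to\infty$. Since the inputs sit within $O(1)$ of the curves $\tau_1+n_1^2=0$ and $\tau_2-n_2^2=0$, the modulation weights $\lan\tau_1+n_1^2\ran^{b_1}$ and $\lan\tau_2-n_2^2\ran^{b_2}$ are comparable to $1$, so neither $b_1$ nor $b_2$ enters the estimates; this is precisely what makes the conclusion uniform in $b_1,b_2$.

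For the bound $s\le 2r-1$ I would take $u_N$ concentrated at $n_1=N$ and $w_N$ at $n=-N$, so that $\bar w_N$ sits at frequency $n_2=N$. The output frequency is then $n=n_1+n_2=2N$, and the dispersive relation~(\ref{e.u2-dispersion}) gives $\tau\approx -n_1^2+n_2^2=0$; this is the resonant configuration in which the output time-frequency is $O(1)$. Concretely I set $\widehat u_N(n,\tau)=\alpha(n)\chi_{[-1,1]}(\tau+n^2)$ and $\widehat w_N(n,\tau)=\gamma(n)\chi_{[-1,1]}(\tau+n^2)$ with $\alpha$ supported at $N$ and $\gamma$ at $-N$. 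A direct computation of the convolution defining $\widehat{\p_x(u_N\bar w_N)}$ shows it has amplitude $\sim 1$ on a $\tau$-set of length $\sim 1$ near $\tau=0$, so the derivative contributes the full factor $|n|\sim N$ while $\lan\tau\ran^{-1/2}\sim 1$; thus $\|\p_x(u_N\bar w_N)\|_{H_t^{-1/2}H_{per}^s}\sim N^{s+1}$, while $\|u_N\|_{X^{r,b_1}}\sim N^r$ and $\|w_N\|_{X^{r,b_2}}\sim N^r$. The assumed inequality then forces $N^{s+1}\lesssim N^{2r}$, i.e.\ $s\le 2r-1$.

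For the bound $s\le r$ I would instead keep $u_N$ at $n_1=N$ but place $w_N$ at frequency $0$, so $\bar w_N$ is also at $n_2=0$ and the output frequency is $n=N$. Now~(\ref{e.u2-dispersion}) gives $\tau\approx -n_1^2+n_2^2=-N^2$, the maximally non-resonant configuration: the output time-frequency is $\sim N^2$, so $\lan\tau\ran^{-1/2}\sim N^{-1}$ exactly cancels the gain $|n|\sim N$ from $\p_x$. Hence $\|\p_x(u_N\bar w_N)\|_{H_t^{-1/2}H_{per}^s}\sim N^s$, while $\|u_N\|_{X^{r,b_1}}\sim N^r$ and $\|w_N\|_{X^{r,b_2}}\sim\lan 0\ran^r\sim 1$. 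The inequality now gives $N^s\lesssim N^r$, i.e.\ $s\le r$. Combining the two constraints yields $s\le\min\{2r-1,\,r\}$, as claimed.

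The only genuinely delicate point is the interplay between the derivative factor $|n|$ and the output time-weight $\lan\tau\ran^{-1/2}$: the whole argument hinges on selecting one configuration that is resonant ($\tau\approx 0$, so $\p_x$ yields a full power of $N$) and one that is extremal off-resonance ($\tau\approx -N^2$, so that power is neutralized). I would therefore spend the most care verifying, from the convolution of the two indicator windows, that in each case the resulting $\widehat{\p_x(u_N\bar w_N)}$ really has both amplitude and $\tau$-support of size $\sim 1$, so that the three norms are genuinely comparable to the stated powers of $N$ and no hidden constant or logarithmic loss spoils the scaling.
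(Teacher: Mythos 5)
Your proposal is correct and follows essentially the same route as the paper: the paper uses exactly these two single-frequency examples with $\chi_{[-1,1]}(\tau+n^2)$ windows, obtaining $N^{s+1}\lesssim N^{2r}$ from the resonant pair $(N,-N)$ and $N^{s}\lesssim N^{r}$ from the pair with one function at frequency zero (the paper puts the zero frequency on $u$ rather than on $w$, which is an immaterial symmetry). Your observations about the modulation weights being $O(1)$ on the supports and about the convolution having amplitude and $\tau$-support of size $\sim 1$ are precisely the checks the paper's ``simple calculation'' relies on.
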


\begin{proof}For a fixed large integer $N\gg 1$, we define de
following sequences:
$$
\alpha_1(n)=
\begin{cases}
1 &\textrm{if $n=N$},\\
0 & \textrm{otherwise}
\end{cases}
\quad\text{and}\quad \beta_1(n)=
\begin{cases}
1 & \textrm{if $n=-N$},\\
0 & \textrm{otherwise}.
\end{cases}
$$
Putting $\widehat{u}_{1_N}(n,\tau) = \alpha_1(n)
\chi_{[-1,1]}(\tau+n^2)$\; and\; $\widehat{w}_{1_N}(n,\tau) =
\beta_1(n) \chi_{[-1,1]}(\tau+n^2)$, a simple calculation using
the dispersive relation (\ref{e.u2-dispersion}) gives that
\begin{equation*}
\|(u_1\bar{w}_1)_x\|_{H_t^{-1/2} H_{per}^s}\sim N^{s+1} \quad
\textrm{and}\quad \|u_1\|_{X^{r,b_1}}\sim N^r \sim
\|w_1\|_{X^{r,b_2}}.
\end{equation*}
Hence, the inequality
$\|(u_1\bar{w}_1)_x\|_{H_t^{-1/2}H_{per}^s}\lesssim
\|u_1\|_{X^{r,b_1}} \|w_1\|_{X^{r,b_2}}$ implies
$$N^{s+1}\le N^{2r},\;\text{for}\; N\gg 1 \Longleftrightarrow s\le 2r-1.$$
Finally, we define
$$
\alpha_2(n)=
\begin{cases}
1 & \textrm{if $n=0$},\\
0 & \textrm{otherwise}
\end{cases}
\quad \text{and} \quad \beta_2(n)=
\begin{cases}
1 & \textrm{if $n=N$},\\
0 & \textrm{otherwise}.
\end{cases}
$$
and we put $\widehat{u}_{2_N}(n,\tau) = \alpha_2(n)
\chi_{[-1,1]}(\tau+n^2)$\; and\;  $\widehat{w}_{2_N}(n,\tau) =
\beta_2(n) \chi_{[-1,1]}(\tau+n^2)$. Then, by similar calculations
as in the previous case we obtain
\begin{equation*}
\|(u_2\bar{w}_2)_x\|_{H_t^{-1/2}H_{per}^s}\sim N^{s}, \quad
\|u_2\|_{X^{r,b_1}}\sim  1\quad \textrm{and} \sim
\|w_2\|_{X^{r,b_2}}\sim  N^r.
\end{equation*}
Again, the inequality
$\|(u_2\bar{w}_2)_x\|_{H_t^{-1/2}H_{per}^s}\lesssim
\|u_2\|_{X^{r,b_1}} \|w_2\|_{X^{r,b_2}}$ implies
$$N^{s}\le N^{r},\;\text{for}\; N\gg 1 \Longleftrightarrow s \le r.$$
Thus, we finished the proof.
\end{proof}
\subsection{Proof of Local Theorem} The next lemmas will be useful
in the proof of  Theorem \ref{local-theorem-periodic}.
\begin{lemma}\label{NLE-Borgain-Spaces} For any $s\in \R$,\;$\delta \in (0,1]$, $0< \mu <1/2$ and
$-1/2<b_1\le b_2 <1/2$ we have
\begin{enumerate}
\item [(a)] {\small $\|\eta_{\delta}(\cdot)F\|_{X^{s,1/2}_{per}} \le C
\delta^{-\mu}\|F\|_{X^{s,1/2}_{per}}$}\; and\; {\small
$\|\eta_{\delta}(\cdot)F\|_{H^{1/2}_tH^s_{per}} \le C\delta^{-\mu}
\|F\|_{H^{1/2}_tH^s_{per}}$;}

\vspace{0.3cm}
\item [(b)] {\small $\|\eta_{\delta}(\cdot)F\|_{X^{s,b_1}_{per}} \le C
\delta^{b_2-b_1}\|F\|_{X^{s,b_2}_{per}}$}\; and\; {\small
$\|\eta_{\delta}(\cdot)F\|_{H^{b_1}_tH^s_{per}} \le
C\delta^{b_2-b_1} \|F\|_{H^{b_2}_tH^s_{per}}$.}
\end{enumerate}
\end{lemma}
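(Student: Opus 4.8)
The plan is to peel off the two structural features of these norms — the Schr\"odinger twist $S(-t)$ and the spatial frequency weight $\langle n\rangle^{s}$ — so that both estimates collapse to a single scalar inequality in the time variable, and then to prove that inequality. First I would observe that since $\eta_\delta$ depends only on $t$ it commutes with $S(-t)=e^{-it\p_x^2}$, so that $S(-t)(\eta_\delta F)=\eta_\delta\,S(-t)F$; because $S$ is unitary on each $H^s_{per}$ this gives $\|\eta_\delta F\|_{X^{s,b}_{per}}=\|\eta_\delta\,(S(-t)F)\|_{H^b_tH^s_{per}}$. Hence, writing $G=S(-t)F$ (which ranges over all admissible functions and satisfies $\|F\|_{X^{s,b}_{per}}=\|G\|_{H^b_tH^s_{per}}$), the $X$-estimates in (a) and (b) are exactly the $H^b_tH^s_{per}$-estimates applied to $G$, so it suffices to prove the latter. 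Next, by Plancherel in both variables, $\|G\|_{H^b_tH^s_{per}}^2=\sum_{n}\langle n\rangle^{2s}\|\widehat G(n,\cdot)\|_{H^b_t(\R)}^2$, and multiplication by $\eta_\delta$ acts only on the time profile $\widehat G(n,\cdot)$ for each fixed $n$. Therefore everything reduces to the one-dimensional multiplier bounds
\begin{equation*}
\|\eta_\delta g\|_{H^{1/2}(\R)}\le C\delta^{-\mu}\|g\|_{H^{1/2}(\R)},\qquad \|\eta_\delta g\|_{H^{b_1}(\R)}\le C\delta^{b_2-b_1}\|g\|_{H^{b_2}(\R)},
\end{equation*}
with constants independent of $n$; summing against $\langle n\rangle^{2s}$ then yields the claimed estimates for every $s\in\R$.

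For part (b) I would prove the scalar inequality by interpolation and duality from two building blocks. The first is the uniform boundedness of the cutoff as a multiplier, $\|\eta_\delta g\|_{H^b}\lesssim\|g\|_{H^b}$ for $|b|<1/2$ with constant independent of $\delta\le 1$; this is the classical fact (see \cite{KPV2}, \cite{ACM}) that the restriction $|b|<1/2$ is precisely what lets multiplication by a sharp or smooth cutoff preserve $H^b$. The second is the local-in-time gain $\|\eta_\delta g\|_{L^2}\lesssim\delta^{b_2}\|g\|_{H^{b_2}}$ for $0\le b_2<1/2$, which follows from $\|\eta_\delta g\|_{L^2}\le\|g\|_{L^2(|t|\le 2\delta)}$ together with the Sobolev embedding $H^{b_2}(\R)\hookrightarrow L^{2/(1-2b_2)}(\R)$ and H\"older's inequality on the interval $|t|\le 2\delta$. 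Complex interpolation between these endpoints (fixing the domain $H^{b_2}$ and moving the target index from $b_2$ down to $0$) produces $\|\eta_\delta g\|_{H^{b_1}}\lesssim\delta^{b_2-b_1}\|g\|_{H^{b_2}}$ for $0\le b_1\le b_2$. The remaining sign configurations are reached by the self-adjointness of multiplication by $\eta_\delta$: duality turns the estimate for $(b_1,b_2)$ into the same estimate for $(-b_2,-b_1)$, which disposes of $b_1\le b_2\le 0$, and for the mixed case $b_1<0<b_2$ one first upgrades the off-diagonal endpoint to $\|\eta_\delta g\|_{H^{-b_2}}\lesssim\delta^{2b_2}\|g\|_{H^{b_2}}$ by pairing against a test function and applying the local gain twice, then interpolates this against the diagonal bound.

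For part (a) — the endpoint $b=1/2$, which lies outside the range $|b|<1/2$ — uniform boundedness genuinely fails, so the point is to show the loss is no worse than $\delta^{-\mu}$ for arbitrarily small $\mu$. I would fix $\varepsilon=2\mu\in(0,1)$ and combine two facts: the uniform bound $\|\eta_\delta g\|_{H^{1/2-\varepsilon}}\lesssim\|g\|_{H^{1/2-\varepsilon}}$ from part (b) (the diagonal case, noting $|1/2-\varepsilon|<1/2$), and the algebra estimate $\|\eta_\delta g\|_{H^{1/2+\varepsilon}}\lesssim\|\eta_\delta\|_{H^{1/2+\varepsilon}}\|g\|_{H^{1/2+\varepsilon}}\lesssim\delta^{-\varepsilon}\|g\|_{H^{1/2+\varepsilon}}$, valid because $H^{1/2+\varepsilon}(\R)$ is a Banach algebra and $\|\eta_\delta\|_{H^{1/2+\varepsilon}}\sim\delta^{-\varepsilon}$. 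Complex interpolation of the operator ``multiply by $\eta_\delta$'' at the midpoint lands on $H^{1/2}$ (both in domain and target) with loss $1^{1/2}(\delta^{-\varepsilon})^{1/2}=\delta^{-\varepsilon/2}=\delta^{-\mu}$, as required.

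The step I expect to be the main obstacle is precisely this endpoint in (a): unlike (b), it cannot be obtained from the $|b|<1/2$ multiplier theory, and one must locate the exact source of the unavoidable $\delta$-loss and confirm it is only an arbitrarily small power. The interpolation across $b=1/2$ sketched above is the cleanest route, but it requires two-sided control — a uniform bound just below $1/2$ and a sharp $\delta^{-\varepsilon}$ bound just above $1/2$ — and care that interpolation on the $H^b$-scale of this multiplication operator is legitimate. By comparison, the reductions in the first paragraph are routine, and the interpolation/duality bookkeeping in (b) is standard once the two building blocks are in hand.
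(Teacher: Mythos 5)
Your proposal is correct in substance, but it is worth noting that the paper does not actually prove this lemma: its ``proof'' is a one-line citation to \cite{Takaoka} and \cite{ACM}, so you have supplied an argument where the authors supply none. Your route is the standard one and all the key steps check out: the commutation of the time cutoff with $S(-t)$ reduces the $X^{s,b}_{per}$ statements to the $H^{b}_tH^s_{per}$ ones; Plancherel in $x$ reduces those to one-dimensional multiplier bounds uniform in $n$; for (b) the combination of H\"older plus the embedding $H^{b_2}(\R)\hookrightarrow L^{2/(1-2b_2)}(\R)$ gives the local gain $\delta^{b_2}$, and interpolation/duality distributes it correctly (the exponent bookkeeping $\delta^{b_2(1-\theta)}=\delta^{b_2-b_1}$ is right); for (a) the computation $\|\eta_\delta\|_{H^{1/2+\varepsilon}}\lesssim\delta^{1/2-(1/2+\varepsilon)}=\delta^{-\varepsilon}$ together with the algebra property of $H^{1/2+\varepsilon}(\R)$ and midpoint interpolation against the uniform bound at $H^{1/2-\varepsilon}$ yields exactly the $\delta^{-\mu}$ loss with $\mu=\varepsilon/2$. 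Two caveats. First, you import the diagonal case $b_1=b_2$ of (b) --- uniform boundedness of multiplication by $\eta_\delta$ on $H^{b}$ for $|b|<1/2$ --- as a classical black box; that is defensible (it is Lemma 2.1 of \cite{Ginibre} in essence, proved there by a direct estimate on the convolution kernel $\widehat{\eta_\delta}$), but it is the one genuinely nontrivial input and a self-contained proof would have to establish it. Second, your treatment of the mixed case only reaches $-b_2\le b_1\le b_2$, since interpolating the symmetric endpoint $H^{b_2}\to H^{-b_2}$ against the diagonal bound cannot produce targets below $-b_2$; the remaining sub-case $-1/2<b_1<-b_2<0<b_2$ follows from the asymmetric version of your own pairing trick, namely $|\langle\eta_\delta g,h\rangle|\le\|\eta_\delta^{1/2}g\|_{L^2}\|\eta_\delta^{1/2}h\|_{L^2}\lesssim\delta^{b_2}\delta^{-b_1}\|g\|_{H^{b_2}}\|h\|_{H^{-b_1}}$, so this is a loose end rather than a gap, but as written the argument does not cover the full range $-1/2<b_1\le b_2<1/2$ claimed in the statement. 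Neither point affects the indices actually used in the proof of Theorem \ref{local-theorem-periodic}.
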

\begin{proof}
The proof of this result can be found, for instance, in
\cite{Takaoka} and \cite{ACM}.
\end{proof}
\begin{lemma}[Trilinear Estimate]\label{Trilinesr-Estimate} For any $s\ge 0$, we have
$$\|uv\bar{w}\|_{X^r_{per}}\lesssim \|u\|_{X^{s,3/8}_{per}}\|v\|_{X^{s,3/8}_{per}}\|w\|_{X^{s,3/8}_{per}}$$
\end{lemma}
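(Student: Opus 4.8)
The plan is to estimate the trilinear term $uv\bar w$ in the $X^r_{per}$ norm defined in (\ref{Bourgain-Space-modified}). Since that norm is the sum of an $X^{r,1/2}_{per}$ piece and an $\ell^2_nL^1_\tau$ piece, I would handle both separately; the main work is the $X^{r,1/2}_{per}$ bound, with the $\ell^2_nL^1_\tau$ piece treated afterward by a Cauchy–Schwarz argument paralleling the one used at the end of Lemma \ref{proposition-uv-periodic} (splitting off the low modulation integral $\int \lan\tau+n^2\ran^{-2a}\,d\tau$ for $a\in(1/2,1)$). I expect the statement is really aimed at $s=r$, so that all three factors sit in $X^{s,3/8}_{per}$ with the same index, and I will run the argument treating $r=s\ge 0$.

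**The core computation** goes through duality and the Fourier side. Writing $\widehat{uv\bar w}$ as a convolution over frequencies $n=n_1+n_2-n_3$ with the conjugation reflecting $\widehat{\bar w}(n_3,\tau_3)=\overline{\widehat w(-n_3,-\tau_3)}$, I would pass to the multiplier estimate
\begin{equation*}
\sum_{n}\int \frac{\lan n\ran^{r}\,\widehat F(n_1,\tau_1)\widehat G(n_2,\tau_2)\widehat H(n_3,\tau_3)\,\varphi(n,\tau)}{\lan\tau+n^2\ran^{1/2}\prod_{j=1}^{3}\lan\tau_j\pm n_j^2\ran^{3/8}\lan n_j\ran^{s}}\lesssim \|F\|\|G\|\|H\|\|\varphi\|,
\end{equation*}
where each $F,G,H$ has $L^2$-normalized profile and $\varphi\in\ell^2_nL^2_\tau$ is the dual variable. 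The first step is to control the frequency weights: since $n=n_1+n_2-n_3$ one always has $\lan n\ran\lesssim \prod_j\lan n_j\ran$, so for $s\ge 0$ the factor $\lan n\ran^{r}/\prod_j\lan n_j\ran^{s}$ is harmless once the highest frequency is matched against $\lan n\ran^r$. The substantive gain must come from the time–modulation weights. I would use the Bourgain-type $L^4_{x,t}$ Strichartz estimate for the periodic Schrödinger group, $\|S(t)f\|_{L^4_{x,t}}\lesssim \|f\|_{L^2}$, which dualizes to the statement that a product of two functions each at modulation exponent $3/8$ is controlled in $L^2_{x,t}$; applying it twice (Hölder splitting the three factors as $L^4\cdot L^4\cdot L^\infty$ or $L^4\cdot L^4\cdot L^2$ against $\varphi$) is the natural route, since $3/8<1/2$ is exactly the regularity at which the periodic $L^4$ estimate closes.

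**The main obstacle** is the periodic $L^4$ Strichartz estimate itself: unlike on $\R$, the periodic bound $\|S(t)f\|_{L^4}\lesssim\|f\|_{H^{0+}}$ historically requires either Bourgain's arithmetic counting of lattice points on parabolas or an explicit summation of the divisor-type sums that arise from the dispersion relations $\tau_j\pm n_j^2$. Concretely, after integrating out two of the $\tau_j$ via Lemma \ref{Lemma-Calculus}(a), one is left with sums of the form $\sum_{n_1}\lan \alpha n_1^2+\beta n_1+\gamma\ran^{-(1-2\theta)}$ which Lemma \ref{Lemma-Calculus}(b) bounds uniformly — but one must first verify that the quadratic resonance function is genuinely nondegenerate in the relevant summation variable, exactly as in the $\widetilde W_2$ analysis of Lemma \ref{proposition-uv-periodic}. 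So the plan is: reduce by duality to the multiplier sum, dispose of the frequency weights by $\lan n\ran\lesssim\prod\lan n_j\ran$ and the hypothesis $s\ge 0$, then close the modulation sum by two applications of Cauchy–Schwarz combined with Lemma \ref{Lemma-Calculus}, with the periodic $L^4$ counting lemma being the delicate ingredient that forces the $3/8$ exponent and permits no improvement below it.
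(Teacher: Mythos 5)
The paper offers no proof of this lemma beyond the citation to Bourgain and to Arbieto--Corcho--Matheus, and your outline reconstructs precisely the argument of those references: duality, the reduction of the frequency weights via $\lan n\ran^{s}\lesssim\prod_{j}\lan n_j\ran^{s}$ for $s\ge 0$, and Bourgain's periodic $L^4$ Strichartz estimate $\|u\|_{L^4_{x,t}}\lesssim\|u\|_{X^{0,3/8}_{per}}$ (proved by the lattice-point counting you describe), which is exactly the ingredient that forces the exponent $3/8$; the complementary $\ell^2_nL^1_\tau$ piece is handled as you say, in parallel with the end of Lemma \ref{proposition-uv-periodic}. The one correction: your parenthetical H\"older splits $L^4\cdot L^4\cdot L^\infty$ and ``$L^4\cdot L^4\cdot L^2$ against $\varphi$'' do not close (an $X^{0,3/8}_{per}$ function need not lie in $L^\infty_{x,t}$, and the exponents do not sum to $1$ with room for $\varphi$); the correct split of the quadrilinear form is $L^4\cdot L^4\cdot L^4\cdot L^4$, i.e.\ your ``apply the bilinear $L^2$ bound twice,'' pairing $uv$ and $\bar w\varphi$.
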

\begin{proof}
See \cite{Bourgain} and \cite{ACM}.
\end{proof}

Now we give the sketch of the proof of local theorem. First, we
let $(u_0, v_0)\in H^r_{per}\times H^s_{per}$ where $r$ and $s$
satisfying
$$\max\{0,\,r-1\}\le s \le \min\{r,\,2r-1\}$$ and we consider the
operator $\Phi=(\Phi_1,\,\Phi_2)$, with
\begin{equation}\begin{split}
&\Phi_1(u,v)=\eta(t)u_0-i\eta(t)\int_0^te^{i(t-t')\partial_x^2}
\left((\eta_{\delta}u\eta_{\delta}v)(t') + \eta_{\delta}u|\eta_\delta u|^2(t') \right)dt',\\
&\Phi_2(u,v)=\eta(t)v_0+
\eta(t)\int_0^t\p_x(|\eta_{\delta}u|^2)(t')dt',
\end{split}\end{equation}
defined on the ball
$$\cB[a,b]=\left \{(u,v)\in  X_{per}^{r}
\times Y_{per}^{s}:\;\|u\|_{X_{per}^{r}}\le a \text{ and }
\|v\|_{Y_{per}^s}\le b\right \}.$$

Then, by Lemmas \ref{proposition-uv-periodic},
\ref{p.derivate-u2-periodic}, \ref{NLE-Borgain-Spaces} and
\ref{Trilinesr-Estimate} we have
\begin{equation}\begin{split}
\|\Phi_1(u,v)\|_{X_{per}^{r}}& \le C_0\|u_0\|_{H^r_{per}}
+C\Bigl (\|\eta_{\delta}u\|_{X_{per}^{r,1/2-\theta}}\|\eta_{\delta}v\|_{H_t^{1/2}H^s_{per}}+\\
&\quad\quad\quad
+\|\eta_{\delta}u\|_{X_{per}^{r,1/2}}\|\eta_{\delta}v\|_{H_t^{1/2-\theta}H^s_{per}}
+ \|\eta_{\delta}u\|^3_{X_{per}^{r,3/8}}\Bigl)\\
&\le C_0\|u_0\|_{H^r_{per}}+C\delta^{\epsilon}(a b + a^3)
\end{split}\end{equation} and
\begin{equation}\begin{split}
\|\Phi_2(u,v)\|_{Y^s_{per}}&\le C_0\|v_0\|_{H^s_{per}}+
C\left(\|\eta_{\delta}u\|_{X_{per}^{r,1/2-\theta}}\|\eta_{\delta}u\|_{X_{per}^{r,1/2}}
\right)\\
&\le C_0\|v_0\|_{H^s_{per}}+C\delta^{\epsilon}a^2,
\end{split}\end{equation}
with $\epsilon
$ enough small.

Now we  put $a=2C_0\|u_0\|_{H_{per}^r}$ and
$b=2C_0\|v_0\|_{H^s_{per}}$ and then we let $\delta$ such that
$\delta^{\epsilon}\le \min
\left\{\frac{1}{2C(ab+a^3)},\;\frac{1}{2Ca^2}\right\}$. Thus, we
have that $\Phi(\cB[a,b])\subset \cB[a,b].$ The contraction
condition
$$\|\Phi(u,v)-\Phi(\tilde u,\tilde v)\|^{r\times s}_{per}
\le C(a,b)\delta^{\theta}\|(u-\tilde u,v-\tilde v)\|^{r\times
s}_{per},$$ where $\|(f,g)\|^{r\times
s}_{per}:=\|f\|_{X^r_{per}}+\|g\|_{Y^s_{per}}$ and $C(a,b)$ is a
positive constant depending only on $a$ and $b$, follows
similarly. This shows that the map $\Phi$ is a contraction on
$\cB[a,b]$. There we obtain a unique fixed point which solves the
system for $T< \delta$ and we finish the proof.
\begin{remark}
We note that global well-posedness in $H^1_{per}\times L^2_{per}$
follows directly of the local theorem for $(r,s)=(1,0)$ combined
with the conservation laws (\ref{CL-1}), (\ref{CL-2}) and
(\ref{CL-3}).
\end{remark}

\section{Ill-posedness}

In this section we will show that the solution of (\ref{benney})
cannot depend uniformly continuously on its initial data for $r<0$
and $s\in \R$. We will use the same argument given in \cite{BGT}.

\subsection{Proof of theorem \ref{ill-posedness-theorem}}
 It is easy to check that
   \begin{equation}\label{52}
     \begin{array}{ll}
     u_{N,a}(t,x)=a\exp(iNx) \exp(-it(N^{2}+(\gamma+\beta)a^2)) \\
     \\
     v_{N,a}(t,x)=\gamma a^2 ,
     \end{array}
   \end{equation}
 where $a\in \R$ and $N$ is any positive integer, solves (\ref{benney}) with initial data $u_0(x)=a\exp(iNx) $ and
 $v_0(x)=\gamma a^2$.
 Moreover, for $a=\alpha (1+N^2)^{\frac{r}{2}}$, where $\alpha $ is a real constant, and $|\gamma |= (1+N^2)^{r}$ we have
 $$||u_0(x)||_{H^r}^2 \leq c\alpha^2,\;\;\text{and}\;\;||v_0(x)||_{H^s}^2 \leq c\alpha^4$$
where $c$ is a constant. Let $a_1=\alpha_1(1+N^{2})^{\frac{r}{2}}$ and $ a_2=\alpha_2(1+N^{2})^{\frac{r}{2}}$. For the Sobolev norm of the difference of two initial data, we have

   $$||u_{N,a_1}(0)-u_{N,a_2}(0)||_{H^r}^{2}=c|\alpha_1-\alpha_2|^{2} \rightarrow 0, \rm{as} \; \alpha_1 \rightarrow \alpha_2$$
  and $$||v_{a_1}(0)-v_{a_2}(0)||_{H^s}^2=|\gamma|^{2}|\alpha_1^2-\alpha_2^2|^{2}(1+N^{2})^{-2r}=|\alpha_1^2-\alpha_2^2|^2,\; \rm{as} \; \alpha_1 \rightarrow \alpha_2.$$
   On the other hand we have
     $$  \begin{array}{ll}
     ||u_{N,a_1}(t,x)-&u_{N,a_2}(t,x)||_{H^r}^2  = \displaystyle \sum_{n=-\infty}^{+\infty}{(1+|n |^{2})^{r}|\hat{u}_{N, \alpha_1 }(n)-\hat{u}_{N, \alpha_2}(n)|^{2}} \\
     \\
     &=(1+N^{2})^{r}|a_1e^{-it(N^{2}+(\gamma+\beta)a_{1}^{2})}-a_2e^{-it(N^{2}+(\gamma+\beta)a_{2}^{2})}|^{2}\\
     \\
     &=|\alpha_1-\alpha_2e^{it(\gamma+\beta)(\alpha_1^2-\alpha_2^2)(1+N^2)^{-r}}|^2
     \end{array}
     $$
   Let $r<0$, and   $\alpha_1$ and $\alpha_2$ are such that $\beta (\alpha_1^2-\alpha_2^2)(1+N^2)^{-r}=\delta (1+N^2)^{\nu},$
   where $\nu >0, $ and $\nu+r<0$. Then for  $t={\frac{\pi}{2}}(\delta^{-1}(1+N^2)^{-\nu})$ we have
   $$||u_{N,a_1}(t,x)-u_{N,a_2}(t,x)||_{H^r}^2 \geq c(\alpha_1^2+\alpha_2^2).$$
   Note that $t$ can made arbitrary small, by choosing $N$ sufficiently large.

\section{Existence of periodic travelling wave solutions}

We are interesting in this section in finding explicit solutions
for (\ref{benney}) in the form
     \begin{equation}\label{21}
      \begin{cases}
         u(t,x)=e^{-i\omega
         t}e^{i{\frac{c}{2}}(x-ct)}\varphi_{\omega, c}(x-ct),\\
         v(t,x)=n_{\omega,c}(x-ct),
       \end{cases}
     \end{equation}
   where
   $\varphi_{\omega,c}$\; and\; $ n_{\omega,c}$ are smooth and
   $L$-periodic functions, $c>0, \; \; \omega \in \mathbb{R}$ and suppose that there is a $q\in \mathbb{N}$ such that
   ${\frac{4\pi q}{c}}=L$. So, putting (\ref{21}) into (\ref{benney}) we obtain
    \begin{equation}\label{23}
      \begin{cases}
           \varphi_{\omega,c}^{''}+
           (\omega+{\frac{c^2}{4}})\varphi_{\omega,c}=
           \varphi_{\omega,c}n_{\omega,c}+\beta
           \varphi_{\omega,c}^3,\\
           -cn_{\omega,c}^{'}=2\varphi_{\omega,c}\varphi_{\omega,c}^{'}.
        \end{cases}
   \end{equation}
 If $n_{\omega,c}=\gamma \varphi_{\omega,c}^2$, then from the
 second equation in (\ref{23}) we have $\gamma={-\frac{1}{c}}$.
 Substituting $n_{\omega,c}$ in the first equation in (\ref{23}),
 it follows that $\varphi_{\omega,c}$ satisfies
   \begin{equation}\label{24}
     \varphi_{\omega,c}^{''}+\left(
     \omega+{\frac{c^2}{4}}\right)\varphi_{\omega,c}=\left(
     \beta-{\frac{1}{c}} \right) \varphi_{\omega,c}^3.
   \end{equation}
 If $1-\beta c>0$ and $\varphi_{\omega,c}=\left(
 {\frac{c}{1-\beta c}}\right)^{\frac{1}{2}}\phi_{\omega,c}$, then
 $\phi_{\omega,c}$ satisfies the equation
   \begin{equation}\label{25}
     \phi_{\omega,c}^{''}-\sigma
     \phi_{\omega,c}+\phi_{\omega,c}^3=0,
   \end{equation}
 where $\sigma =-\omega-{\frac{c^2}{4}}$. So, by following Angulo's arguments
in (\cite{An1}, \cite{An}) we have from (\ref{25}) that
 $\phi_{\omega,c}$ satisfies the first-order equation
   \begin{equation}\label{26}
     [\phi_{\omega,c}']^2=\frac{1}{2} P_{\phi}(\phi),
   \end{equation}
  where $P_{\phi}(t)=-t^4+2\sigma t^2+2B_{\phi}$ and $B_{\phi}$ is
  an integration constant. Let $-\eta_1< -\eta_2< \eta_2<\eta_1$
  are the zeros of the polynomial $P_{\phi}(t)$. Then
    \begin{equation}\label{27}
      [\phi_{\omega,c}^{'}]^2=\frac{1}{2}(\eta_1^2-\phi_{\omega,c}^2)(
      \phi_{\omega,c}^2-\eta_2^2).
    \end{equation}
 The solution of (\ref{27}) is
   \begin{equation}\label{28}
     \phi_{\omega,c}=\eta_1 dn\left( {\frac{\eta_1}{\sqrt{2}}}\xi ;
     \kappa \right),
   \end{equation}
 where
   \begin{equation}\label{29}
         \eta_1^2+\eta_2^2=2\sigma,\;\;
         \kappa^2={\frac{\eta_1^2-\eta_2^2}{\eta_1^2}}, \;\;
         0<\eta_2<\eta_1.
   \end{equation}
 Define the function in variable $\kappa , \; \; 0<\kappa <1,$
   $$
   K=K(\kappa)=\int_{0}^{1}{{\frac{dt}{\sqrt{(1-t^2)(1-\kappa^2t^2)}}}}
   $$
   called the complete elliptic integral of the first kind. Since
   $dn$ has fundamental period $2K(\kappa)$, it follows that
   $\phi_{\omega,c}$ has fundamental period
     $$ T_{\phi_{\omega,c}}={\frac{2\sqrt{2}}{\eta_1}}K(\kappa) $$

     Analogously as in \cite{An1} we obtain the following result.
      \begin{theorem}\label{t21}
        Let $L$  be fixed but arbitrary
        positive constant and $1-\beta c>0$, and
        $-\omega-\frac{c^2}{4}>0$.
         Let $\sigma_0>{\frac{2\pi^2}{L^2}}$
        and $\eta_{2,0}=\eta_2(\sigma_0)\in
        (0,\sqrt{\frac{\sigma_0}{3}})$ is the unique such that
        $T_{\phi}=L$. Then hold the following assertions:

        (1) There exists an interval $I(\sigma_0)$ around of
        $\sigma_0$, an interval $B(\eta_{2,0})$ around
        $\eta_{2,0}$, and a unique smooth function $\Lambda :
        I(\sigma_0) \rightarrow B(\eta_{2,0})$, such that
        $$ \Lambda(\sigma_0)=\eta_{2,0} \; \; \rm{and} \; \;
        {\frac{2\sqrt{2}}{\sqrt{2\sigma-\eta_2^2}}}K(\kappa)=L,$$
        where $\sigma \in I(\sigma_0), \eta_2=\Lambda(\sigma)$.

        (2) Solutions $(\varphi_{\omega,c}, n_{\omega,c})$ of (\ref{23}) given
        by
          \begin{equation}\label{psolu1}
            \begin{cases}
                \varphi_{\omega,c}=\sqrt{\frac{c}{1-\beta c}}\eta_1
                dn\left( {\frac{\eta_1}{\sqrt{2}}}\xi ; \kappa
                \right),\\
                n_{\omega,c}=-{\frac{\eta_1^2}{1-\beta c}}
                dn^2\left( {\frac{\eta_1}{\sqrt{2}}}\xi ; \kappa
                \right),
             \end{cases}
          \end{equation}
      with $\eta_1=\eta_1(\sigma), \; \eta_2=\eta_2(\sigma), \; \;
      \eta_1^2+\eta_2^2=2\sigma $, have the fundamental period $L$
      and satisfies (\ref{23}). Moreover, the mapping
        $$ \sigma \in I(\sigma_0) \rightarrow (\varphi_{\omega,c},
        n_{\omega,c}) $$
      is a smooth function.

       (3) $I(\sigma_0)$ can be chosen as $({\frac{2\pi^2}{L^2}},
       +\infty)$.

       (4) The mapping $\sigma \rightarrow \kappa(\sigma)$ is a
       strictly increasing function.
     \end{theorem}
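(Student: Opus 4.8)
The plan is to express the fundamental period as a function of a single elliptic modulus, reduce the condition $T_{\phi}=L$ to a scalar monotone equation, and then upgrade the resulting root to a smooth family via the Implicit Function Theorem, exactly in the spirit of Angulo \cite{An1}. First I would eliminate $\eta_1,\eta_2$ in favor of $\sigma$ and $\kappa$. From the two relations $\eta_1^2+\eta_2^2=2\sigma$ and $\kappa^2=(\eta_1^2-\eta_2^2)/\eta_1^2$ in (\ref{29}) one solves
$$\eta_1^2=\frac{2\sigma}{2-\kappa^2},\qquad \eta_2^2=\frac{2\sigma(1-\kappa^2)}{2-\kappa^2},$$
and substituting $\eta_1$ into $T_{\phi}=\tfrac{2\sqrt2}{\eta_1}K(\kappa)$ yields
$$T_{\phi}=\frac{2}{\sqrt{\sigma}}\,\sqrt{2-\kappa^2}\,K(\kappa).$$
Hence the period condition $T_{\phi}=L$ is equivalent to $\Theta(\kappa)=\tfrac{L}{2}\sqrt{\sigma}$, where $\Theta(\kappa):=\sqrt{2-\kappa^2}\,K(\kappa)$; writing $\eta_1=\sqrt{2\sigma-\eta_2^2}$ recovers the form $\tfrac{2\sqrt2}{\sqrt{2\sigma-\eta_2^2}}K(\kappa)=L$ displayed in assertion (1), and the admissible range $0<\eta_2<\eta_1$ corresponds exactly to $\kappa\in(0,1)$.

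The heart of the argument, and what I expect to be the main obstacle, is to prove that $\Theta$ is \emph{strictly increasing} on $(0,1)$. Using the classical derivative formulas $\frac{dK}{d\kappa}=\frac{E-(1-\kappa^2)K}{\kappa(1-\kappa^2)}$ and $\frac{dE}{d\kappa}=\frac{E-K}{\kappa}$ for the complete elliptic integrals, a direct computation gives
$$\Theta'(\kappa)=\frac{N(\kappa)}{\kappa(1-\kappa^2)\sqrt{2-\kappa^2}},\qquad N(\kappa):=(2-\kappa^2)E(\kappa)-2(1-\kappa^2)K(\kappa).$$
The delicate point is that $N(0)=0$ (since $E(0)=K(0)=\pi/2$), so positivity of $\Theta'$ is not immediate. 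I would therefore differentiate once more, where a cancellation produces the clean expression
$$N'(\kappa)=3\kappa\bigl(K(\kappa)-E(\kappa)\bigr).$$
Because $K>E$ on $(0,1)$, this gives $N'>0$, hence $N>0$, hence $\Theta'>0$ throughout $(0,1)$. Everything here reduces to a short but careful manipulation of elliptic-integral identities.

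With monotonicity in hand the four assertions follow readily. Since $\Theta$ is continuous and strictly increasing with $\Theta(0^+)=\pi/\sqrt2$ and $\Theta(\kappa)\to\infty$ as $\kappa\to1^-$, the equation $\Theta(\kappa)=\tfrac{L}{2}\sqrt{\sigma}$ has a unique root $\kappa(\sigma)\in(0,1)$ precisely when $\tfrac{L}{2}\sqrt{\sigma}>\pi/\sqrt2$, i.e. when $\sigma>2\pi^2/L^2$; this identifies the global interval $I(\sigma_0)=(2\pi^2/L^2,+\infty)$ of assertion (3). The same monotonicity of $\Theta$, combined with the strictly increasing right-hand side $\tfrac{L}{2}\sqrt{\sigma}$, forces $\sigma\mapsto\kappa(\sigma)$ to be strictly increasing, which is assertion (4).

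Finally, for the smoothness claimed in (1) and (2) I would invoke the Implicit Function Theorem applied to
$$F(\sigma,\eta_2):=\frac{2\sqrt2}{\sqrt{2\sigma-\eta_2^2}}\,K\bigl(\kappa(\sigma,\eta_2)\bigr)-L,$$
which is smooth on its domain and satisfies $F(\sigma_0,\eta_{2,0})=0$. At fixed $\sigma$ the modulus $\kappa$ is a smooth strictly monotone function of $\eta_2$, and since $\Theta'\neq0$ this transfers to $\partial F/\partial\eta_2\neq0$ at $(\sigma_0,\eta_{2,0})$; the theorem then yields a unique smooth map $\Lambda:I(\sigma_0)\to B(\eta_{2,0})$ with $\Lambda(\sigma_0)=\eta_{2,0}$ solving $F(\sigma,\Lambda(\sigma))=0$, establishing assertion (1). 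Composing $\Lambda$ with the smooth maps $\sigma\mapsto(\eta_1,\eta_2,\kappa)$ and with the explicit formulas (\ref{psolu1}) produces the smooth family $\sigma\mapsto(\varphi_{\omega,c},n_{\omega,c})$ of solutions of (\ref{23}) with fundamental period $L$, which is assertion (2).
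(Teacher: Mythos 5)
Your proof is correct and follows essentially the same route as the paper, which in fact gives no proof of Theorem \ref{t21} at all and simply defers to Angulo \cite{An1}; your argument (elimination of $\eta_1,\eta_2$ in favor of $\sigma,\kappa$, strict monotonicity of $\kappa\mapsto\sqrt{2-\kappa^2}\,K(\kappa)$ via $N'(\kappa)=3\kappa(K-E)>0$, the limits $\pi/\sqrt{2}$ and $+\infty$ giving the threshold $\sigma>2\pi^2/L^2$, and the Implicit Function Theorem for smoothness) is precisely the standard construction from that reference. All the computations check out.
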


  \section{Stability of travelling waves}

     In this section we consider the stability of the orbit
$$
\Omega_{(\Phi,\Psi)}=\{(e^{i\theta}\Phi(\cdot+x_0),\Psi(\cdot+x_0));\;
(\theta,x_0)\in [0,2\pi)\times \Bbb R\},
$$
in $H^1_{per}([0,L])\times L^2_{per}([0,L])$ by the periodic flow
generated by (\ref{benney}), where we have that
$\Phi(\xi)=e^{ic\xi/2}\varphi_{\omega,c}(\xi)$,
$\Psi(\xi)=n_{\omega,c}(\xi)$, with $\varphi_{\omega,c},
n_{\omega,c}$ given in (\ref{psolu1}). Let $X$ be the space
     $X=H^1_{complex}([0,L])
     \times L^2_{real}([0,L])$, with real inner product

     $$(\vec{u_1} , \vec{u_2} )=\Re \int_{0}^{L}{(\varepsilon_1 \overline{\eta}_1 +
       \varepsilon_{1x}\overline{\eta}_{1x}+\varepsilon_2\overline{\eta_2})}dx.$$

Let $T_1 , \; \; T_2$ be one-parameter groups of unitary
  operators on $X$ defined by
    $$ \begin{array}{ll}
          T_1(s)\vec{u}(\cdot )=\vec{u}(\cdot +s) \\
          T_2(r)\vec{u}(\cdot )=(e^{-ir}\varepsilon(\cdot ),
          n(\cdot ))
       \end{array}
    $$
    for $\vec{u} \in X, \; \; s,\; r \in \mathbb{R}$. Obviously
      $$ T_1^{'}(0)=\begin{pmatrix}
          -\partial_x & 0 \\
          0 &-\partial_x
         \end{pmatrix} ,\; \; \; \;
         T_2^{'}(0)=\begin{pmatrix}
          -i & 0 \\
          0  & 0\\
         \end{pmatrix}.
      $$
      Note that the equation (\ref{benney}) is invariant under $T_1$ and
  $T_2$. If
    $$ \Phi_{\omega , c}(x)=(\varepsilon_{\omega , c}(x),
     n_{\omega , c}(x))
    $$
  where $\varepsilon_{\omega , c}(x)=e^{i{\frac
  {c}{2}}x}\varphi_{\omega, c}(x)$, then from Theorem \ref{t21}
  we obtain that
    $$ T_1(ct)T_2(\omega t)\Phi_{\omega , c}(x) $$
  is a travelling wave solution of (\ref{23}) with $ \varphi_{\omega,
  c}(x), n_{\omega , c}(x) $ defined by (\ref{psolu1}).

Now, it is easy to verify that $E_2(\vec{u})$ is invariant under
     $T_1$ and $T_2$
       \begin{equation}\label{31}
         E(T_1(s)T_2(r)\vec{u})=E(\vec{u}).
       \end{equation}
     We also have
       \begin{equation}\label{32}
         E(\vec{u}(t))=E(\vec{u}_0),
       \end{equation}
 and  that equation (\ref{benney}) can be written as the
    following Hamiltonian system
       \begin{equation}\label{33}
         {\frac {d\vec{u}}{dt}}=JE'(\vec{u})
       \end{equation}
    where $\vec{u} = (u,v)$ and $J$ is a skew-symmetric linear operator  defined by
      $$ J=\begin{pmatrix}
           -i & 0 \\
           0 & 2\partial_x
         \end{pmatrix}
      $$
    and
      $$ E'(u,v)=\begin{pmatrix}
                         -u_{xx}+u v+\beta |u|^2u \\
                         {\frac{1}{2}}|u|^2
                      \end{pmatrix}
      $$
    is the Frechet derivative of $E$. Define $B_1$ and $B_2$ such that
     $T_1'(0)=JB_1, \; \; T_2'(0)=JB_2$, then
      $$
        Q_1(\vec{u})={\frac {1}{2}}\langle B_1\vec{u}, \vec{u}
        \rangle = -{\frac{1}{4}}\int_{0}^{L}{v^2}dx+{\frac{1}{2}}Im
        \int_{0}^{L}{u_x \overline{u}}dx
      $$
      $$
        Q_2(\vec{u})={\frac {1}{2}}\langle B_2 \vec{u}, \vec{u}
        \rangle ={\frac{1}{2}}\int_{0}^{L}{|u|^2}dx.
      $$
      It is easy to verify that
       \begin{equation}\label{34}
         Q_1(T_1(s)T_2(r)\vec{u})=Q_1(\vec{u}), \; \;
         Q_2(T_1(s)T_2(r)\vec{u})=Q_2(\vec{u})
       \end{equation}
       \begin{equation}\label{35}
          Q_1(\vec{u}(t))=Q_1(\vec{u}(0)), \; \;
         Q_2(\vec{u}(t))=Q_2(\vec{u}(0))
       \end{equation}
    and
      $$
        Q_1^{'}(u,v)=\begin{pmatrix}
                          -iu_x \\
                          -\frac{1}{2}v
                       \end{pmatrix}, \; \; \;
        Q_2{'}(u,v)=\begin{pmatrix}
                          u \\
                          0 \\
                       \end{pmatrix}.
     $$
From (\ref{23}) we have
       \begin{equation}\label{36}
         E'(\Phi_{\omega , c})-cQ_1'(\Phi_{\omega ,
         c})-\omega Q_2'(\Phi_{\omega ,c})=0 .
       \end{equation}

    Define an operator from $X$ to $X^*$
      \begin{equation}\label{37}
        H_{\omega , c}=E''(\Phi_{\omega , c})-cQ_1''(\Phi_{\omega ,
         c})-\omega Q_2''(\Phi_{\omega ,c})
      \end{equation}
    and the function $d(\omega , c): \mathbb{R}\times \mathbb{R}\rightarrow
    \mathbb{R}$ by
      \begin{equation}\label{38}
        d(\omega , c)=E(\Phi_{\omega , c})-cQ_1(\Phi_{\omega ,
         c})-\omega Q_2(\Phi_{\omega ,c}).
      \end{equation}
   The operator $H_{\omega , c}$ is self-adjoint. The spectrum of
   $H_{\omega ,c}$ consists of the real numbers $\lambda $ such
   that $H_{\omega ,c}-\lambda I$ is not invertible. From  (\ref{23}) we have
     \begin{equation}\label{310}
       T_1^{'}(0)\Phi_{\omega ,c} \in Ker H_{\omega ,c} , \; \;
       T_2^{'}(0)\Phi_{\omega ,c} \in Ker H_{\omega ,c}.
     \end{equation}

   Let $Z=\{k_1T_1^{'}(0)\Phi_{\omega ,c}+k_2T_2^{'}(0)\Phi_{\omega
   ,c} \; , \; \; k_1, k_2 \in \mathbb{R}\} $. By (\ref{310}), $Z$
   is  in the kernel of $H_{\omega ,c}$.

    \textbf{Assumption} \textit{(Spectral decomposition of $H_{\omega ,c}$)} : The space $X$
    is decomposed as a direct sum
    $$
    X=N\oplus Z\oplus P
    $$
    where $Z$ is defined above, $N$ is a finite-dimensional subspace such that
      $$ \langle H_{\omega ,c} \vec{u}, \vec{u} \rangle <0  \; \;
      \rm{for}  \; \; \vec{u}\in N
      $$
    and $P$ is a closed subspace such that $ \langle H_{\omega ,c} \vec{u}, \vec{u} \rangle \geq \delta ||\vec{u}||_{X}^2$,
    for $\vec{u} \in P$ with some constant $\delta >0$ independent
    of $\vec{u}$.

    Our stability results is based in the following general theorem in \cite{GrShSt2},

    \begin{theorem}\label{t31}( Abstract Stability Theorem) Assume
    that there exists three functionals $E, Q_1, Q_2$ satisfying (\ref{31})-(\ref{35}).
    Let $n(H_{\omega ,c})$ be the number of negative eigenvalues of
    $H_{\omega ,c}$. Assume $d(\omega ,c)$ is non-degenerated at
    $(\omega, c)$ and let $p(d'')$ be the number of positive
    eigenvalues of $d''$. If $p(d'')=n(H_{\omega ,c})$, then
    the periodic travelling  wave $\Phi_{\omega ,c}(x)$ is
    orbitally stable.
    \end{theorem}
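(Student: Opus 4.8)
The plan is to follow the Grillakis--Shatah--Strauss scheme, exploiting the fact that by \eqref{36} the profile $\Phi_{\omega,c}$ is a critical point of the \emph{Lyapunov functional}
\[
V(\vec u):=E(\vec u)-cQ_1(\vec u)-\omega Q_2(\vec u),
\]
whose second variation at $\Phi_{\omega,c}$ is exactly the self-adjoint operator $H_{\omega,c}$ of \eqref{37}. Since $E,Q_1,Q_2$ are conserved along the flow (see \eqref{32} and \eqref{35}) and invariant under the groups $T_1,T_2$ (see \eqref{31} and \eqref{34}), the functional $V$ is both conserved and invariant under the orbit $\Omega_{(\Phi,\Psi)}$; this lets me treat $V$ as a candidate Lyapunov functional for the whole orbit rather than for the single point $\Phi_{\omega,c}$, and it reduces matters to understanding the sign of $\langle H_{\omega,c}\cdot,\cdot\rangle$ transverse to the symmetry directions.

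First I would reduce orbital stability to a \emph{coercivity} statement for $H_{\omega,c}$. Using the spectral decomposition assumption $X=N\oplus Z\oplus P$, the non-degeneracy of $d''$, and the matching condition $p(d'')=n(H_{\omega,c})$, I would show that the finitely many negative directions of $H_{\omega,c}$ are exactly absorbed by the two conserved constraints $Q_1,Q_2$. Concretely, the target is a constant $\delta'>0$ with
\[
\langle H_{\omega,c}\,\vec y,\vec y\rangle\ \ge\ \delta'\,\|\vec y\|_X^2
\]
for every $\vec y$ tangent to the constraint set $\{Q_1=\mathrm{const},\,Q_2=\mathrm{const}\}$ and transverse to the kernel $Z$ spanned by $T_1'(0)\Phi_{\omega,c}$ and $T_2'(0)\Phi_{\omega,c}$ (recall \eqref{310}). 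The equality $p(d'')=n(H_{\omega,c})$ is precisely what the Hessian-counting argument of \cite{GrShSt1,GrShSt2} requires so that, after imposing the two constraints, no negative direction of $H_{\omega,c}$ survives on the orbit-transverse subspace $P$.

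With coercivity in hand, I would close the argument by contradiction. Assuming $\Phi_{\omega,c}$ orbitally unstable, I would pick data $\vec u_0^{\,n}\to\Phi_{\omega,c}$ whose solutions first reach the boundary of a fixed tubular neighborhood of $\Omega_{(\Phi,\Psi)}$ at times $t_n$. A modulation argument (implicit function theorem) then lets me write $\vec u^{\,n}(t_n)=T_1(s_n)T_2(r_n)\bigl(\Phi_{\omega,c}+\vec y_n\bigr)$ with $\vec y_n\perp Z$ and $\|\vec y_n\|_X$ bounded below. Conservation of $E,Q_1,Q_2$ forces $V(\vec u^{\,n}(t_n))\to V(\Phi_{\omega,c})$ and $Q_i(\vec u^{\,n}(t_n))\to Q_i(\Phi_{\omega,c})$, so after correcting $\vec y_n$ by the small constraint directions a Taylor expansion yields
\[
o(1)=V(\vec u^{\,n}(t_n))-V(\Phi_{\omega,c})=\tfrac12\langle H_{\omega,c}\,\vec y_n,\vec y_n\rangle+o(\|\vec y_n\|_X^2)\ \ge\ \tfrac{\delta'}{2}\|\vec y_n\|_X^2+o(\|\vec y_n\|_X^2),
\]
which contradicts the lower bound on $\|\vec y_n\|_X$, establishing stability.

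The main obstacle I anticipate is the coercivity step: converting the abstract counting condition $p(d'')=n(H_{\omega,c})$ into the positivity of $\langle H_{\omega,c}\cdot,\cdot\rangle$ on the constrained, orbit-transverse subspace. This demands careful spectral bookkeeping tracking how the range of $d''$, spanned by $\partial_\omega\Phi_{\omega,c}$ and $\partial_c\Phi_{\omega,c}$, interacts with the negative subspace $N$ and the kernel $Z$; it is the only place where the full strength of the non-degeneracy of $d''$ and of the spectral decomposition assumption is used. The modulation and contradiction steps are, by comparison, routine once the conservation laws \eqref{31}--\eqref{35} are invoked.
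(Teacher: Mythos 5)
You should first note that the paper contains no proof of Theorem \ref{t31}: it is stated as a known abstract result imported from Grillakis, Shatah and Strauss \cite{GrShSt2}, and all of the paper's own work goes into verifying its hypotheses for the Benney system (the spectral decomposition $X=N\oplus Z\oplus P$ with $n(H_{\omega,c})=1$, and $p(d'')=1$ via the sign of $\det d''$). So there is no internal argument to compare yours against; the relevant comparison is with the cited reference. Your outline is the standard Grillakis--Shatah--Strauss scheme and is the correct route: $\Phi_{\omega,c}$ is a critical point of $V=E-cQ_1-\omega Q_2$ by (\ref{36}) with Hessian $H_{\omega,c}$, stability reduces to coercivity of $\langle H_{\omega,c}\,\cdot,\cdot\rangle$ on the subspace tangent to the level sets of $Q_1,Q_2$ and transverse to $Z$, and one closes with a modulation--contradiction argument using (\ref{32}) and (\ref{35}).

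As a proof, however, the proposal is incomplete at exactly the step you flag as the obstacle, and that step is the entire content of the theorem. The missing bookkeeping runs as follows: differentiating (\ref{36}) in $\omega$ and $c$ gives $H_{\omega,c}\,\partial_\omega\Phi_{\omega,c}=Q_2'(\Phi_{\omega,c})$ and $H_{\omega,c}\,\partial_c\Phi_{\omega,c}=Q_1'(\Phi_{\omega,c})$, while $d_\omega=-Q_2(\Phi_{\omega,c})$ and $d_c=-Q_1(\Phi_{\omega,c})$ show that the matrix of $\langle H_{\omega,c}\,\cdot,\cdot\rangle$ restricted to $\mathrm{span}\{\partial_\omega\Phi_{\omega,c},\partial_c\Phi_{\omega,c}\}$ is $-d''$; hence $H_{\omega,c}$ has exactly $p(d'')$ negative directions inside that span, the span is nondegenerate for the form precisely because $d''$ is non-degenerate, and the constraint tangent space $\{y:\langle Q_1'(\Phi_{\omega,c}),y\rangle=\langle Q_2'(\Phi_{\omega,c}),y\rangle=0\}$ is its $H_{\omega,c}$-orthogonal complement. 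The signature count then gives that $H_{\omega,c}$ restricted to the constrained space has $n(H_{\omega,c})-p(d'')=0$ negative directions, and the closed-subspace positivity in the decomposition assumption upgrades this to uniform coercivity off $Z$. Without this computation the hypothesis $p(d'')=n(H_{\omega,c})$ is never actually used, so your sketch, while consistent with the argument of \cite{GrShSt2} that the paper invokes, is not yet a self-contained proof.
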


    The idea of the proof of Theorem \ref{stability theorem} is to
    apply the general Theorem \ref{t31}. Initially we identify the
    quadratic form associated to $H_{\omega,c}$. Let $\vec{z}=(e^{i\frac{c}{2}x}z_1, z_2)$, with
    $z_1=y_1+iy_2$, $y_1=Rez_1, y_2=Imz_1$. By direct computation,
    we get
     $$ \langle H_{\omega,c}\vec{z_1}, \vec{z_1}\rangle=\langle
     L_1y_1, y_1\rangle +\langle L_2y_2, y_2
     \rangle+{\frac{c}{2}}\int_{0}^{L}{\left(z_2+\frac{2}{c}\varphi_{\omega,c}
     y_1\right)^2}dx $$
    where
    $$L_1=-\partial_x^2-\left(\frac{c^2}{4}+\omega\right)+
    3\left(\beta-\frac{1}{c}\right)\varphi_{\omega,c}^2
    $$
    $$L_2=-\partial_x^2-\left(\frac{c^2}{4}+\omega\right)+
    \left(\beta-\frac{1}{c}\right)\varphi_{\omega,c}^2.
    $$
   From (\ref{23}) we also have $L_1(\partial_x\varphi_{\omega,c})=0$ and
   $L_2\varphi_{\omega,c}=0$. Consider the following periodic eigenvalue problems for $i=1,2$,
     \begin{equation}\label{311}
         \left\{
          \begin{array}{ll}
          L_if=\lambda f \\
          f(0)=f(L), \; \; f'(0)=f'(L),
         \end{array} \right.
    \end{equation}
    The problem (\ref{311}) determines a countable infinite set of eigenvalues $\{ \lambda_{n} \}$
 with $\lambda_n \rightarrow \infty$, so from  the Oscillation Theorem \cite{MaWi} we have that they are distributed  in the specific form
 $\lambda_0<\lambda_1 \leq \lambda_2< \lambda_3 \leq \lambda_4, ...$.

 For the eigenvalue problems (\ref{311}) we have the same results.

      \begin{theorem}\label{t32}
        Let $\sigma \in [ {\frac{2\pi^2}{L^2}}, +\infty )$ and
        $(\varphi_{\omega,c}, n_{\omega,c})$ be the travelling
        wave solutions of (\ref{psolu1}). Then the first three
        eigenvalues of operator $L_1$ are simple, $0$ is the
        second eigenvalue of $L_1$ with eigenfunction
        $\partial_x \varphi_{\omega,c}$. The first eigenvalue of
        the operator $L_2$ is $0$, which is simple.
     \end{theorem}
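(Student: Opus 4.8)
The plan is to reduce both operators, after a rescaling of the spatial variable, to shifted \emph{Lamé operators} whose band-edge eigenvalues and eigenfunctions are explicitly computable, and then to extract all the spectral information from the classical oscillation theorem for Hill's equation \cite{MaWi}. Set $y=\frac{\eta_1}{\sqrt2}\,\xi$ and recall from (\ref{29}) that $\eta_1^2+\eta_2^2=2\sigma$ and $\kappa^2=\frac{\eta_1^2-\eta_2^2}{\eta_1^2}$, so that $\frac{2\sigma}{\eta_1^2}=2-\kappa^2$, where $\sigma=-\omega-\frac{c^2}{4}$. Since $\bigl(\beta-\frac1c\bigr)\varphi_{\omega,c}^2=-\eta_1^2\,dn^2(y;\kappa)$ and $dn^2=1-\kappa^2 sn^2$, a direct substitution turns the operators into
\begin{equation*}
L_2=\frac{\eta_1^2}{2}\Bigl[-\partial_y^2+2\kappa^2 sn^2(y;\kappa)-\kappa^2\Bigr],\qquad
L_1=\frac{\eta_1^2}{2}\Bigl[-\partial_y^2+6\kappa^2 sn^2(y;\kappa)-(4+\kappa^2)\Bigr].
\end{equation*}
Thus $L_2$ is a positive multiple of the $n=1$ Lamé operator $-\partial_y^2+2\kappa^2 sn^2$ shifted by $-\kappa^2$, while $L_1$ is a positive multiple of the $n=2$ Lamé operator $-\partial_y^2+6\kappa^2 sn^2$ shifted by $-(4+\kappa^2)$. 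In these variables the relevant period is $2K(\kappa)$, which matches the period $L$ of $\varphi_{\omega,c}$ through $\frac{\eta_1}{\sqrt2}L=2K$.

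The assertion for $L_2$ is then immediate: we already know that $L_2\varphi_{\omega,c}=0$, and since $\varphi_{\omega,c}\propto dn(y;\kappa)>0$ has no zeros on a period, Sturm--Liouville theory forces $0$ to be the lowest periodic eigenvalue of $L_2$ and to be simple. For $L_1$ I would compute the five band edges of the $n=2$ Lamé operator explicitly. Seeking eigenfunctions that are either products of two Jacobian functions or quadratics in $sn$, one finds the eigenpairs
\begin{equation*}
\begin{array}{lll}
\psi_0 = sn^2+\alpha_-, & \lambda_0 = 2(1+\kappa^2)-2\sqrt{1-\kappa^2+\kappa^4}, & \text{period } 2K;\\
cn\,dn, & 1+\kappa^2, & \text{period } 4K;\\
sn\,dn, & 1+4\kappa^2, & \text{period } 4K;\\
sn\,cn, & 4+\kappa^2, & \text{period } 2K;\\
\psi_4 = sn^2+\alpha_+, & \lambda_4 = 2(1+\kappa^2)+2\sqrt{1-\kappa^2+\kappa^4}, & \text{period } 2K,
\end{array}
\end{equation*}
where $\alpha_\pm=-2/\lambda_\pm$ are fixed by the eigenvalue equation. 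One then checks the strict chain $\lambda_0<1+\kappa^2<1+4\kappa^2<4+\kappa^2<\lambda_4$ for $0<\kappa<1$, the last inequality reducing to $3\kappa^4>0$.

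Feeding this into the oscillation theorem together with the interlacing of periodic and antiperiodic eigenvalues, the $2K$-periodic eigenvalues of $-\partial_y^2+6\kappa^2 sn^2$ are exactly $\lambda_0<4+\kappa^2<\lambda_4$, the eigenfunctions $cn\,dn$ and $sn\,dn$ being antiperiodic and hence lying in the open gaps of the periodic problem. Since $\lambda_0$ is the simple bottom of the spectrum and the second gap $(4+\kappa^2,\lambda_4)$ is open, these three periodic eigenvalues are simple. Undoing the shift by $-(4+\kappa^2)$ and the positive factor $\frac{\eta_1^2}{2}$, the first three periodic eigenvalues of $L_1$ become $\frac{\eta_1^2}{2}(\lambda_0-4-\kappa^2)<0$, then $0$, then $\frac{\eta_1^2}{2}(\lambda_4-4-\kappa^2)>0$, all simple. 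Because $\partial_y\,dn=-\kappa^2\,sn\,cn$, the eigenfunction $sn\,cn$ corresponds precisely to $\partial_x\varphi_{\omega,c}$, whose eigenvalue is the middle one, namely $0$; this is exactly assertions (1) and (2).

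The step I expect to demand the most care is the band-edge analysis of the $n=2$ Lamé operator: correctly producing all five eigenfunctions together with the period ($2K$ versus $4K$) of each, and then using the interlacing of periodic and antiperiodic spectra to certify that $0$ is the \emph{second} periodic eigenvalue rather than merely an eigenvalue, and that the adjacent gap is genuinely open so that simplicity holds. This is precisely the point at which one must follow closely the Lamé-polynomial computations of Angulo \cite{An1,An}.
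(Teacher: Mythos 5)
Your proposal is correct and follows essentially the same route as the paper: both reduce $L_1$ (after the rescaling $y=\tfrac{\eta_1}{\sqrt2}\xi$ and the identity $dn^2=1-\kappa^2 sn^2$) to the $n=2$ Lam\'e operator, read off the explicit band-edge eigenfunctions, invoke Floquet/oscillation theory for the simplicity of the first three periodic eigenvalues, and identify $sn\,cn\propto\partial_x\varphi_{\omega,c}$ with the zero eigenvalue; the $L_2$ argument via positivity of $dn$ is also identical. Your extra computation of the antiperiodic edges $cn\,dn$ and $sn\,dn$ is a harmless refinement of the paper's appeal to the instability intervals $(-\infty,\rho_0)$ and $(\rho_1,\rho_2)$.
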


\begin{proof} Since $L_2 \varphi_{\omega, c}=0$ and $\varphi_{\omega, c}$ has no zeros on $[0,L]$, then zero is the first eigenvalue of $L_2$. Now since $L_1 \partial_x \varphi_{\omega, c}=0$ and $\partial
\varphi_{\omega, c}$ has two zeros on $[0,L)$, then it follows
that eigenvalue zero of $L_1$ is either $\lambda_1$ or
$\lambda_2$. Let $\psi=f(\theta x)$, where
$\theta^2={\frac{2}{\eta_{1}^2}}$. From equality
$\kappa^2sn^2(x)+dn^2(x)=1$ and (\ref{311}), we obtain that $\psi$
satisfies the equation
  \begin{equation}\label{314}
    \psi^{''} +(\rho-6\kappa^2 sn^2(x))\psi=0,
  \end{equation}
  where
    \begin{equation}\label{315}
     \rho = 6-{\frac{2}{\eta_{1}^2}}(\sigma-\lambda).
    \end{equation}
From Floquet theory, it follows that $(-\infty, \rho_0)$ and $(\rho_1, \rho_2)$ are instability intervals associated to the Lame's equation. Therefore the eigenvalues
$\rho_0, \rho_1$ and $\rho_2$ of (\ref{315}) are simple and the
rest of eigenvalues $\rho_3\leq \rho_4,...$ satisfies
$\rho_3=\rho_4, \rho_5=\rho_6, ...$ The eigenvalues $\rho_0,
\rho_1, \rho_2$ and its corresponding eigenfunctions $\psi_0,
\psi_1, \psi_2$ are
  $$\begin{array}{ll}
    \rho_{0}=2(1+\kappa^{2}-\sqrt{1-\kappa^{2}+\kappa^{4}}, &
     \psi_{0}=1-(1+\kappa^{2}-\sqrt{1-\kappa^{2}+\kappa^{4}})sn^{2}(x) \\
    \\
    \rho_{1}=4+\kappa^{2}, & \psi_2=sn(x)cn(x) \\
    \\
    \rho_{2}=2(1+\kappa^{2}+\sqrt{1-\kappa^{2}+\kappa^{4}}, &  \psi_{2}=1-(1+\kappa^{2}+\sqrt{1-\kappa^{2}+\kappa^{4}})sn^{2}(x)
   \end{array}
 $$
 Since $\rho_{0}<\rho_{1}$ for every $\kappa^{2} \in (0,1)$, then from (\ref{315}) we have
   $$3 \lambda_0={\frac{\eta_1^2}{2}}(\kappa^{2}-2-2\sqrt{1-\kappa^{2}+\kappa^{4}})<0$$
 Therefore $\lambda_0$ is negative eigenvalue of $L_1$ with eigenfunction $\chi_{0}(x)=\psi_{0}({\frac{x}{\theta}})$. Similarly
   $$3\lambda_2={\frac{\eta_1^2}{2}}(\kappa^{2}-2+2\sqrt{1-\kappa^{2}+\kappa^{4}})>0$$
 and $\lambda_2$ is the positive eigenvalue of $L_1$ with eigenfunction $\chi_{2}(x)= \psi_{2}({\frac{x}{\theta}})$. Thus
   $$\lambda_1={\frac{\eta_{1}^{2}(\rho_{1}-6)+2\sigma}{6}} ={\frac{\eta_{1}^{2}}{6}(4+\kappa^{2}-6+2-\kappa^{2})}=0$$
 is the second eigenvalue of $L_1$. This complete the proof of the theorem.
\end{proof}

\begin{remark}
The main properties of the spectrum of $L_1$, namely, there is
exactly a negative eigenvalue and zero is simple, it can also be
obtained via positive properties of the Fourier transform of the
solution $\varphi_{\omega,c}$ (see Angulo\&Natali \cite{AN} and Angulo \cite{AN3}).
\end{remark}

 So, from Theorem \ref{t32} we obtain immediately the following two
     results.

   \begin{lemma}\label{l31}
     For any real function $y_1 \in H^1$ satisfying
       $ \langle y_1, \chi_0\rangle=\langle y_1,
       \partial_x\varphi_{\omega,c}\rangle =0 $
    there exists a positive constant $\delta_1>0$ such that
       $ \langle L_1y_1, y_1\rangle \geq \delta_1||y_1||_{H^1}^2.
       $
   \end{lemma}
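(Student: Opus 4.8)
The plan is to exploit the complete spectral picture of $L_1$ furnished by Theorem~\ref{t32}: establish coercivity of $\langle L_1\cdot,\cdot\rangle$ in the $L^2$ norm on the codimension-two subspace cut out by the two orthogonality conditions, and then bootstrap this $L^2$ estimate up to the full $H^1$ norm.

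First I would record that $L_1=-\partial_x^2+V$ with $V(x)=-(\tfrac{c^2}{4}+\omega)+3(\beta-\tfrac1c)\varphi_{\omega,c}^2(x)$ smooth and $L$-periodic, hence bounded. As a self-adjoint operator on $L^2_{per}([0,L])$ with compact resolvent, $L_1$ admits a complete orthonormal system of eigenfunctions $\{\chi_n\}_{n\ge0}$ with eigenvalues $\lambda_0<\lambda_1\le\lambda_2<\cdots\to+\infty$. By Theorem~\ref{t32}, $\lambda_0<0$ is simple (eigenfunction $\chi_0$), the second eigenvalue $\lambda_1=0$ is simple with eigenfunction proportional to $\partial_x\varphi_{\omega,c}$, and $\lambda_2>0$. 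Expanding $y_1=\sum_{n\ge0}a_n\chi_n$, the hypotheses $\langle y_1,\chi_0\rangle=\langle y_1,\partial_x\varphi_{\omega,c}\rangle=0$ annihilate precisely the two non-positive modes, that is $a_0=a_1=0$, because eigenfunctions for distinct eigenvalues are $L^2$-orthogonal. Consequently
\[
\langle L_1y_1,y_1\rangle=\sum_{n\ge2}\lambda_n a_n^2\ge\lambda_2\sum_{n\ge2}a_n^2=\lambda_2\|y_1\|_{L^2}^2,
\]
which is the desired coercivity in $L^2$ with the strictly positive constant $\lambda_2$.

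To upgrade to $H^1$, I would combine the $L^2$ bound with the elementary inequality $\langle L_1y_1,y_1\rangle=\|\partial_xy_1\|_{L^2}^2+\langle Vy_1,y_1\rangle\ge\|\partial_xy_1\|_{L^2}^2-\|V\|_{\infty}\|y_1\|_{L^2}^2$. Splitting $\langle L_1y_1,y_1\rangle=\theta\langle L_1y_1,y_1\rangle+(1-\theta)\langle L_1y_1,y_1\rangle$ for $\theta\in(0,1)$ and estimating the first summand by the gradient bound and the second by the $L^2$ coercivity gives
\[
\langle L_1y_1,y_1\rangle\ge\theta\|\partial_xy_1\|_{L^2}^2+\bigl((1-\theta)\lambda_2-\theta\|V\|_{\infty}\bigr)\|y_1\|_{L^2}^2.
\]
Choosing $\theta>0$ so small that $(1-\theta)\lambda_2-\theta\|V\|_{\infty}>0$ makes both coefficients positive, and with $\delta_1:=\min\{\theta,\,(1-\theta)\lambda_2-\theta\|V\|_{\infty}\}$ one obtains $\langle L_1y_1,y_1\rangle\ge\delta_1\|y_1\|_{H^1}^2$, as claimed.

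The only delicate point, and the step I would verify most carefully, is the spectral bookkeeping: one must be certain that the two linear constraints remove exactly the negative and the zero eigenspaces and nothing more. This is guaranteed by the \emph{simplicity} of both $\lambda_0$ and of the zero eigenvalue $\lambda_1$ in Theorem~\ref{t32}; had either eigenvalue been multiple, two orthogonality conditions would not project onto the strictly positive subspace and the argument would break down. Once the $L^2$ gap $\lambda_2>0$ is in hand, the passage to $H^1$ is the routine boundedness-of-potential argument above.
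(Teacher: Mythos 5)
Your proof is correct and follows exactly the route the paper intends: the paper states Lemma~\ref{l31} as an immediate consequence of Theorem~\ref{t32} without writing out the details, and your argument (simplicity of $\lambda_0$ and of the zero eigenvalue $\Rightarrow$ the two orthogonality conditions project onto the spectral subspace where $L_1\ge\lambda_2>0$ in $L^2$, followed by the standard bounded-potential bootstrap from $L^2$ to $H^1$ coercivity) is precisely the standard filling-in of that claim.
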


   \begin{lemma}\label{l32}
     For any real function $y_2\in H^1$ satisfying $\langle y_2, \varphi_{\omega,c}\rangle =0 $
     there exists a positive constant $\delta_2$ such that $\langle L_2 y_2, y_2 \rangle \geq \delta_2||y_2||_{H^1}^2.$
   \end{lemma}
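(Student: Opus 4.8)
The plan is to mirror the proof of Lemma~\ref{l31}, exploiting the spectral information for $L_2$ furnished by Theorem~\ref{t32}. The key point there is that $0$ is the \emph{first} and \emph{simple} eigenvalue of $L_2$, with eigenfunction $\varphi_{\omega,c}$ (which is positive, hence a genuine ground state by the Oscillation Theorem). Consequently the second eigenvalue $\lambda_1=\lambda_1(L_2)$ is strictly positive, so there is a spectral gap above the bottom of the spectrum.

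First I would establish coercivity in $L^2$. Let $\{e_j\}_{j\ge 0}$ be an $L^2$-orthonormal basis of eigenfunctions of $L_2$ with eigenvalues $0=\lambda_0<\lambda_1\le\lambda_2\le\cdots$, so that $e_0$ is a multiple of $\varphi_{\omega,c}$. Writing $y_2=\sum_{j\ge 0}c_j e_j$, the hypothesis $\langle y_2,\varphi_{\omega,c}\rangle=0$ forces $c_0=0$, and the spectral theorem gives
\begin{equation*}
\langle L_2 y_2, y_2\rangle=\sum_{j\ge 1}\lambda_j|c_j|^2\ge \lambda_1\sum_{j\ge 1}|c_j|^2=\lambda_1\|y_2\|_{L^2}^2 .
\end{equation*}

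Next I would upgrade this $L^2$-bound to the desired $H^1$-bound by a convex-combination argument. Write
\begin{equation*}
\langle L_2 y_2, y_2\rangle=\|\p_x y_2\|_{L^2}^2+\int_0^L V\,y_2^2\,dx,\qquad V:=-\Bigl(\tfrac{c^2}{4}+\omega\Bigr)+\Bigl(\beta-\tfrac1c\Bigr)\varphi_{\omega,c}^2 .
\end{equation*}
Since $\varphi_{\omega,c}$ is smooth and $L$-periodic, $V$ is bounded, say $\|V\|_{L^\infty}\le M$, whence $\int_0^L V y_2^2\,dx\ge -M\|y_2\|_{L^2}^2$. For $\theta\in(0,1)$ I split $\langle L_2 y_2,y_2\rangle=\theta\langle L_2 y_2,y_2\rangle+(1-\theta)\langle L_2 y_2,y_2\rangle$, bound the first summand by the $L^2$-coercivity above and the second by the explicit expression, obtaining
\begin{equation*}
\langle L_2 y_2, y_2\rangle\ge (1-\theta)\|\p_x y_2\|_{L^2}^2+\bigl[\theta\lambda_1-(1-\theta)M\bigr]\|y_2\|_{L^2}^2 .
\end{equation*}
Choosing $\theta$ sufficiently close to $1$ (namely $\theta>M/(M+\lambda_1)$) makes both coefficients positive, and setting $\delta_2:=\min\{1-\theta,\;\theta\lambda_1-(1-\theta)M\}>0$ yields $\langle L_2 y_2, y_2\rangle\ge \delta_2\|y_2\|_{H^1}^2$, as claimed.

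The only genuinely delicate step is the $L^2$-coercivity, and all of its content is already packaged in Theorem~\ref{t32}: the positivity of the spectral gap $\lambda_1>0$ rests on the fact that $0$ is a simple eigenvalue sitting at the bottom of the spectrum of $L_2$. Once that is granted, the passage to $H^1$ is routine and uses only the boundedness of the potential $V$. I would remark that this is precisely why Lemma~\ref{l32} is simpler than Lemma~\ref{l31}: for $L_2$ one removes a single ground-state direction, whereas for $L_1$, which carries a negative eigenvalue, one must quotient out two directions ($\chi_0$ and $\p_x\varphi_{\omega,c}$) before the analogous gap estimate applies.
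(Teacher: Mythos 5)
Your argument is correct and is exactly the standard way to extract Lemma~\ref{l32} from Theorem~\ref{t32}, which is all the paper does (it states that the lemma follows ``immediately'' from the spectral information, leaving the spectral-gap estimate $\langle L_2y_2,y_2\rangle\ge\lambda_1\|y_2\|_{L^2}^2$ and the routine upgrade to the $H^1$ norm implicit). Your convex-combination step with the bounded potential is the usual G\r{a}rding-type argument and fills in precisely what the authors omit.
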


 \begin{proof} {\bf [Theorem \ref{stability theorem}]} Choose $y_1^{-}=\chi_0, y_2^{-}=0,
   z_2^{-}=-{\frac{2}{c}}\varphi_{\omega,c}\chi_0$ and
   $\Psi^{-}=(y_1^{-}, y_2^{-}, z_2^{-})$ then
     $$
     \langle H_{\omega,c} \Psi^{-}, \Psi^{-} \rangle
     =\lambda_{0}\langle \chi_0, \chi_0 \rangle <0.
      $$
   So $H_{\omega,c}$ has a negative eigenvalue. Note that the following vectors
   $$
   \Psi_{0,1}=(\partial_x\varphi_{\omega,c},
   0,-{\frac{2}{c}}\varphi_{\omega, c} \partial_x\varphi_{\omega,c}),\;\;
   \Psi_{0,2}=(0,\varphi_{\omega,c},0)
   $$
   are in the kernel of operator $H_{\omega,c}$. Define the following subspaces associated to $H_{\omega,c}$:
   $$
   Z=\{k_1\Psi_{0,1}+k_1\Psi_{0,2} : k_1, k_2 \in
   \mathbb{R}\}
   $$
    $$
    N=\{ k\Psi^{-}: k\in \mathbb{R}\}$$
    $$ P=\{ \vec{p}\in X : \vec{p}=(p_1,p_2, p_3), \langle p_1,
    \chi_1 \rangle=\langle p_1, \partial_x\varphi_{\omega,c}\rangle =
    \langle p_2, \varphi_{\omega,c}\rangle=0\}.
    $$
For any  $\vec{u} \in X, \vec{u}=(y_1, y_2, y_2)$ choose
   $$
   a={\frac{\langle y_1, \chi_0 \rangle }{\langle \chi_0, \chi_0 \rangle }},
 \; b_{1}={\frac{ \langle \partial_{x} \varphi_{\omega, c}, y_1 \rangle }{\langle \partial_{x} \varphi_{\omega, c}, \partial_{x} \varphi_{\omega, c} \rangle}},
 \; b_{2}={\frac{\langle \varphi_{\omega, c}, y_2 \rangle }{\langle \varphi_{\omega, c}, \varphi_{\omega, c}
 \rangle}},
 $$
 then $\vec{u}$ uniquely can be represented by $
\vec{u}=a\Psi^{-}+b_1\Psi_{0,1}+b_2\Psi_{0,2}+\vec{p}$,
 where $\vec{p}\in P$. For any $\vec{p} \in P$, by  Lemmas \ref{l31} and \ref{l32}, we
     have
       $$ \langle H_{\omega,c}\vec{p}, \vec{p}\rangle \geq
       \delta_1||p_1||_{H^1}^2+\delta_1||p_2||_{H^1}^2+{\frac{c}{2}}
       \int_{0}^{L}{\left( p_3+\frac{2}{c}\varphi p_1\right)^2}dx
       $$

Next we consider the following two cases:
\begin{enumerate}

\item[(1)] If $||p_3||_{L^2} \geq {\frac{8 ||\varphi_{\omega,
c}||_{L^{\infty}}}{c}}||p_1||_{L^2}$, then

     $${\frac{c}{2}}\int_{0}^{L}{\left( p_3+{\frac{2}{c}}\varphi_{\omega, c}p_1 \right)^2}dx \geq {\frac{c}{2}}\left[ ||p_3||_{L^2}^{2}-{\frac{4}{c}}||\varphi_{\omega, c}||_{L^{\infty}}||p_1||_{L^2}||p_3||_{L^2}\right]={\frac{c}{4}}||p_3||_{L^2}^{2}$$

\item[(2)] If $||p_3||_{L^2} \leq {\frac{8 ||\varphi_{\omega, c}||_{L^{\infty}}}{c}}||p_1||_{L^2}$, then
       $$\delta_{1} ||p_1||_{H^1}^{2}\geq {\frac{\delta_1}{2}}||p_1||_{H^1}^{2}+{\frac{\delta_1}{2}}{\frac{c}{8||\varphi_{\omega, c}||_{L^{\infty}}}}||p_3||_{L^2}^{2}$$
    Thus, for any $\vec{p}\in P$, it follows that

     $$\langle H_{\omega,c}\vec{p}, \vec{p}\rangle \geq
     \delta_3||p_3||_{L^2}^2+{\frac{\delta_1}{2}}||p_1||_{H^1}^2+
     \delta_2||p_2||_{H^1}^2, $$
   where $\delta_3=\min\{ {\frac{\delta_{1} c}{16||\varphi_{\omega, c}||_{L^{\infty}}}}, {\frac{c}{4}}\}$. Finally, we have
     $$\langle H_{\omega, c} \vec{p}, \vec{p} \rangle \geq \delta ||\vec{p}||_{X}^{2},$$
   where $\delta >0$ is independent of $\vec{p}$. This proved that Assumption above is holds, and
$n(H_{\omega,c})=1$.
\end{enumerate}

Now we shall verify that $p(d'')=1$. We have
        $$
        d_c(\omega,c)=-Q_1(\Phi_{\omega,c})={\frac{1}{4(1-\beta
        c)^2}}\int_{0}^{L}{\varphi_{\omega,c}^4}dx-{\frac{c^2}{4(1-\beta
        c)}}\int_{0}^{L}{\varphi_{\omega,c}}dx $$
        $$
        d_{\omega}(\omega,c)=-Q_2(\Phi_{\omega,c})=-{\frac{c}{2(1-\beta
        c)}}\int_{0}^{L}{\varphi_{\omega,c}^2}dx.
         $$
      From equalities
        $$ \int_{0}^{L}{\varphi_{\omega,c}^2}dx={\frac{8KE}{L}},\; \;\int_{0}^{L}{\varphi_{\omega,c}^4}dx
        ={\frac{64}{L^3}}V(\kappa)
        $$
      where $E=E(\kappa)=\int_{0}^{1}{\sqrt{\frac{1-\kappa^2t^2}{1-t^2}}}dt$ is
      the complete elliptic integral of the second kind and
      $V(\kappa)={\frac{\kappa^2-1}{3}}K^4+{\frac{2}{L}}(2-\kappa^2)K^2E$,
      we obtain
        \begin{equation}\label{316}
         \begin{array}{ll}
          d_{\omega \omega}= & {\frac{4c}{L(1-\beta
        c)}}(K^{'}(\kappa)E(\kappa)+K(\kappa)E^{'}(\kappa))\kappa^{'}(\sigma),
        \\
        \\
         d_{\omega c}= & -{\frac{4}{L(1-\beta
        c)^2}}K(\kappa)E(\kappa)+{\frac{c}{2}}d_{\omega \omega}, \;\;\;
         d_{c \omega}=  -{\frac{16}{L^3(1-\beta
        c)^2}}V^{'}(\kappa)\kappa^{'}(\sigma)+{\frac{c}{2}}d_{\omega
        \omega} \\
        \\
         d_{cc}= & {\frac{32\beta}{L^3(1-\beta
        c)^3}}V(\kappa)-{\frac{8c}{L^3(1-\beta
        c)^2}}V^{'}(\kappa)\kappa^{'}(\sigma)- {\frac{2c(2-\beta
        c)}{L(1-\beta
        c)^2}}K(\kappa)E(\kappa)+{\frac{c^2}{4}}d_{\omega \omega}.
        \end{array}
        \end{equation}
 Thus
     $$ \begin{array}{ll}
         d_{cc}d_{\omega \omega}-d_{c \omega}d_{\omega c}= &
          -{\frac{64}{L^4(1-\beta
         c)^4}}V^{'}(\kappa)\kappa^{'}(\sigma)K(\kappa)E(\kappa)+\\
         \\
         & {\frac{1}{L(1-\beta
         c)}}\left[ {\frac{32\alpha}{L^2(1-\beta
         c)^2}}V(\kappa)-2cK(\kappa)E(\kappa)\right] d_{\omega
         \omega}.
       \end{array}
     $$
      We have,
     $$
     V'(\kappa)={\frac{2K^2E}{\kappa (1-\kappa^2)}}\left[ (2-\kappa^{2})E-(1-\kappa^{2})K \right],\;\;\text{and}\;\;
     {\frac{V}{L^{2}}}={\frac{\sigma (\kappa^{2}-1)}{12(2-\kappa^2)}}K^{2}+{\frac{\sigma}{6}}KE.
     $$
     Using the above estimates, we obtain
     $$
     \begin{array}{lll}
     d_{cc}d_{\omega \omega}-d_{c\omega}d_{\omega c}&=
     {\frac{4\kappa'}{L^{2}(1-\beta c)^{2}}}\left\{ -32{\frac{K^{2}}{L^{2}}}KE^{2}
     \left[(2-\kappa^{2})E-2(1-\kappa^{2})K\right]\right\}\\
     &+\left. {\frac{c}{3}}\left[ {\frac{8\beta \sigma (\kappa^{2}-1)}{2-\kappa^{2}}}K^{2}+(16
     \beta \sigma -6c(1-\beta c)^{2})KE\right] \left[ {\frac{E^{2}}
     {\kappa (1-\kappa^{2})}}-{\frac{K^{2}}{\kappa}}\right] \right\}\\
     &={\frac{4K\kappa^{'}}{L^{2}(1-\beta c)^{2}}}
     \left\{-{\frac{8 \sigma}{2-\kappa^{2}}}\left[(2-\kappa^{2})E^{3}-2(1-\kappa^{2})KE^{2}\right]\right\}\\
     &+\left. \left[{\frac{8 \beta \sigma c (\kappa^{2}-1)}{3(2-\kappa^{2})}}K
     +{\frac{c(16 \beta \sigma-6c(1-\beta c)^{2})}{3}}E\right] \left[ {\frac{E^{2}}
     {\kappa (1-\kappa^{2})}}-{\frac{K^{2}}{\kappa}}\right] \right\}
     \end{array}
     $$

   From Theorem \ref{t21}-(4), we have that $\kappa^{'}>0$. Therefore the sign
of $det(d'')=d_{cc}d_{\omega \omega}-d_{c \omega}d_{\omega c}$
depends on the sign of
$$
     \begin{array}{ll}
     B(c, \omega, \kappa ,\beta)= & \left\{-{\frac{8 \sigma}{2-\kappa^{2}}}
     \left[(2-\kappa^{2})E^{3}-2(1-\kappa^{2})KE^{2}\right]\right.\\
     \\
     &+\left. \left[{\frac{8 \beta \sigma c (\kappa^{2}-1)}{3(2-\kappa^{2})}}K
     +{\frac{c(16 \beta \sigma-6c(1-\beta c)^{2})}{3}}E\right]
     \left[ {\frac{E^{2}}{\kappa (1-\kappa^{2})}}-{\frac{K^{2}}{\kappa}}\right]
     \right\}.
     \end{array}
$$
From the relation
      \begin{equation}\label{317}
        0<{\frac{(1-\kappa^{2})K}{(2-\kappa^{2})E}}<{\frac{1}{2}}
      \end{equation}
    we get that the first term of $B(c, \omega, \kappa, \beta)$ is negative.
     Now we consider three cases for $\beta$.

    (1) Obviously if $\beta =0$, then $det(d^{''})<0$.

    (2) For $\beta<0$, using (\ref{317}), we get
       $$\begin{array}{ll}
       {\frac{8\beta \sigma (\kappa^{2}-1)K}{3(2-\kappa^2)E}}+{\frac{c(16\beta \sigma - 6c(1-\beta c)^{2})}{3}}E=\\
       \\
       =-{\frac{8c\beta \sigma E}{3}}\left[ {\frac{(1-\kappa^{2})K}{(2-\kappa^{2})E}}-2+{\frac{6c(1-\beta c)^{2}}{8c\beta}}\right]<0
       \end{array}
      $$
   and $det(d^{''})<0$.

   (3) If $\beta>0$ and $8\beta \sigma-3c(1-\beta c)^2\leq 0$, then all terms of $B(c, \omega, \kappa, \beta)$
 are negatives and $det(d'')<0$.

   Thus under above three conditions, $d''(\omega , c)$ has exactly one positive and
one negative eigenvalues and $p(d'')=1$. This finishes the proof
of the Theorem.
\end{proof}




\smallskip


\begin{thebibliography}{99}

\addcontentsline{toc}{chapter}{References}

\bibitem{AlBoHe} J. Albert, J. Bona, and D. Henry, {\it Sufficient
conditions for stability of solitary wave solutions of model
equations for long waves}, Physica D, {\bf 24} (1987), 343-366.

\bibitem{AlBo} J. Albert and  J. Bona, {\it Total positivity and
stability of internal waves in stratified fluids of finite depth},
IMA J. Appl. Math., {\bf 46} (1991), 1-19.

\bibitem{An1} J. Angulo, {\it  Stability of dnoidal waves to Hirota-Satsuma system},
Diff. Int. Eqs., {\bf 18} (2005), 611-645.

\bibitem{An} J. Angulo, {\it  Non-linear stability of periodic
travelling-wave equation for the Schr\"odinger and modified
Korteweg-de Vries equation}, J. of Diff. Equations, {\bf 235} (2007), 1-30.

\bibitem{AN3} J. Angulo, {\it Nonlinear Dispersive Equations: Existence and Stability of Solitary and Perio\-dic Travelling Wave Solutions},
Mathematical Surveys and Monographs (SURV),  AMS, {\bf 156} (2009).

\bibitem{AN} J. Angulo and F. Natali, {\it Positivity properties of the Fourier transform and the stability of
periodic travelling-wave solutions}, SIAM, J. Math. Anal., {\bf 40} (2008),  1123-115.

\bibitem{ACM}{A. Arbieto, A. J. Corcho, and C. Matheus,}
{\it Rough solutions for the periodic Schr\"odinger-Korteweg-de
Vries System,} {J. of Diff. Equations, {\bf 230} (2006), 295-336.}

\bibitem{Bekiranov}{D. Bekiranov, T. Ogawa and G. Ponce,}
{\it Interaction equation for short and long dispersive waves,}
{J. Funct. Anal., {\bf 158} (1998), 357-388.}

\bibitem{Benney1}{D. J. Benney,}
{\it Significant interactions between small and large scale surface waves,}
{Stud. Appl. Math., {\bf 55} (1976), 93-106.}

\bibitem{Benney2}{D. J. Benney,}
{\it A general theory for interactions between short and long waves,}
{Stud. Appl. Math., {\bf 56} (1977), 81-94.}

\bibitem{BoCh} {B. Guo and  L. Chen,} {\it Orbital stability of
solitary waves of the long wave-short wave resonance equations,}
Math. Meth. Appl. Sci., {\bf 21} (1998), 883-894.

\bibitem{BoSoSt} J. Bona, P. Souganidis, and  W. Strauss, {\it Stability
and instability of solitary waves of KdV type},
Proc. Roy. Soc.London A, {\bf 411} (1987), 395-412.

\bibitem{Bourgain} J. Bourgain, {\it Fourier restriction phenomena for certain lattice
subsets and applications to nonlinear evolution equations, Part
I}, {Geometric and Func. Anal. {\bf 3} (1993), 107-156.}

\bibitem{BGT} N. Burq, P. Gerard, and N. Tzvetkov,
{\it An instability property of the nonlinear Schrodinger equation
in $S^{d}$}, Math. Research Letters, {\bf 9} (2002), 323-335.

\bibitem{Corcho}{A. J. Corcho,}
{\it Ill-posedness for  the Benney system,} {Discrete and
Continuous Dynamical Systems, {\bf 15} (2006), 965-972.}

\bibitem{Ginibre}{J. Ginibre, Y. Tsutsumi, and G. Velo,}
{\it On the Cauchy Problem for the Zakharov system,}
{J. Funct. Anal., {\bf 151} (1997), 384-436.}

\bibitem{GrShSt1} M. Grillakis, J. Shatah, and W. Strauss, {\it Stability
theory of solitary waves in the presence of symmetry I}, J. Funct.
Anal., {\bf 74} (1987), 160-197.

\bibitem{GrShSt2} M. Grillakis, J. Shatah, and W. Strauss, {\it Stability
theory of solitary waves in the presence of symmetry II}, J. Funct.
Anal., {\bf 94} (1990), 308-348.

\bibitem{KPV2}{C. E. Kenig, G. Ponce, and L. Vega,}
{\it A bilinear estimate with aplications to the KdV equation,}
{J. Amer. Math. Soc., {\bf 9} (1996), 573-603.}

\bibitem{KPV3}{C. E. Kenig, G. Ponce, and L. Vega,}
{\it On ill-posedness of some canonical dispersive equations,}
{Duke Math. J., {\bf 106} (2001), 617-633.}

\bibitem{Laurencot}{Ph. Lauren\c cot,}
{\it On a nonlinear Schr\"odinger equation arising in the theory of water waves,}
{Nonlinear Anal. TMA., {\bf 24} (1995), 509-527.}

\bibitem{MaWi} W. Magnus and  S. Winkler, {\it Hill's equation},
Interscience, Tracts in Pure and Appl. Math. Wiley, NY., {\bf 20} (1976).

\bibitem{Takaoka}{H. Takaoka,}
{\it Well-posedness  for the Zakharov system with the periodic
boundary condition,} {Differential and Integral Equations, {\bf
12}, $n^o 6$, (1999), 789-810.}

\bibitem{Tsutsumi-Hatano}{M. Tsutsumi and S. Hatano,}
{\it Well-posedness of the Cauchy problem for long wave - short wave resonance equation,}
{Nonlinear Anal. TMA., {\bf 22} (1994), 155-171.}

\end{thebibliography}
\end{document}